\numberwithin{equation}{section}
\newtheorem{thm}{Theorem}[section]
\newtheorem{lem}[thm]{Lemma}
\begin{document}
\title{Berry-Esseen bound for the Brownian motions on hyperbolic spaces\thanks{This work was 
supported by JSPS KAKENHI Grant Numbers JP22K18675, JP23H01076.}}
\author{Yuichi Shiozawa\thanks{Department of Mathematics,
Graduate School of Science, Osaka University, Toyonaka, Osaka, 560-0043,
Japan; \texttt{shiozawa@math.sci.osaka-u.ac.jp}}}
\maketitle

\begin{abstract}
We obtain the uniform convergence rate for the Gaussian fluctuation 
of the radial part of the Brownian motion on a hyperbolic space. 
We also show that this result is sharp 
if the dimension of the hyperbolic space is two or general odd. 
Our approach is based on the repetitive use of the Millson formula and the integration by parts formula. 
\end{abstract}

\section{Introduction}
We are concerned with the Gaussian fluctuation 
of the radial part of the Brownian motion on a $d$-dimensional hyperbolic space. 
In this note, we obtain the uniform convergence rate for the fluctuation in distribution, 
the so-called Berry-Esseen bound, 
together with the sharpness for $d=2$ or general odd $d\ge 3$.  

For $d\ge 2$, let ${\mathbb H}^d$ be the $d$-dimensional hyperbolic space 
with a pole $o$, 
and let $d=d_{{\mathbb H}^d}$ be the associated distance function.
Let $\Delta=\Delta_{{\mathbb H}^d}$ be the Laplace-Beltrami operator, 
and $X=(\{X_t\}_{t\geq 0},\{P_x\}_{x\in {\mathbb H}^d})$ 
the Brownian motion on ${\mathbb H}^d$ generated by $\Delta/2$.
Let $R_t^{(d)}=d(o,X_t) \ (t\geq 0)$ be the radial process and $P=P_o$. 
Then by the It\^o formula applied to $R_t^{(d)}$, 
we have
\begin{equation}\label{eq:radial}
R_t^{(d)}=B_t+\frac{d-1}{2}\int_0^t\coth(R_s^{(d)})\,{\rm d}s,
\end{equation}
where $B_t$ is the Brownian motion on ${\mathbb R}$ 
(see, e.g., \cite[Example 3.3.3]{H02}).
Since $\lim_{t\rightarrow\infty}R_t^{(d)}=\infty$, 
we obtain the law of large numbers:
\begin{equation}\label{eq:linear}
\lim_{t\rightarrow\infty}\frac{R_t^{(d)}}{t}=\frac{d-1}{2}, \quad \text{$P$-a.s.}
\end{equation}
(see also \cite[Subsection 4.1]{GH09} and \cite{S17} for the escape rate). 
Moreover, we realize the limiting behavior of the fluctuation in \eqref{eq:linear} 
as the central limit theorem: 
\begin{equation}\label{eq:clt}
\lim_{t\rightarrow\infty}
P\left(\frac{R_t^{(d)}-(d-1)t/2}{\sqrt{t}}\geq x\right)=\Phi(x), \quad x\in {\mathbb R}
\end{equation}
(see, e.g., \cite[Corollary 3.1]{B94} and \cite[Theorem 2.1]{M10}), 
where 
$$
\Phi(x)=\frac{1}{\sqrt{2\pi}}\int_x^{\infty}e^{-u^2/2}\,{\rm d}u, \quad x\in {\mathbb R}.
$$

In connection with the heat equation in ${\mathbb H}^d$, 
V\'azquez \cite[(5.5)]{V22} noted that 
the limit in \eqref{eq:clt} is uniformly convergent in $x\in {\mathbb R}$. 
Our purpose in this note is to establish the uniform convergence rate in \eqref{eq:clt}:
\begin{thm}
\label{thm:berry}
For any $d\geq 2$, 
there exists a constant $c_1>0$ such that 
\begin{equation}\label{eq:berry-1}
\sup_{x\in {\mathbb R}}
\left|P\left(\frac{R_t^{(d)}-(d-1)t/2}{\sqrt{t}}\geq x\right)-\Phi(x)\right|
\le \frac{c_1}{\sqrt{t}}, \quad t\ge 1.
\end{equation}
Moreover, if $d=2$ or $d\ge 3$ is odd,  
then there exists a constant $c_2>0$ such that 
\begin{equation}\label{eq:berry-2}
P\left(\frac{R_t^{(d)}-(d-1)t/2}{\sqrt{t}}\geq 0\right)-\Phi(0)
\ge \frac{c_2}{\sqrt{t}}, \quad t\ge 1.
\end{equation}
Namely, the convergence rate of \eqref{eq:berry-1} is sharp. 
\end{thm}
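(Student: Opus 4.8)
The plan is to obtain a sufficiently explicit formula for the density $f_d(t,r)$ of $R_t^{(d)}$ and to read off the convergence rate from its asymptotics. Write $m_d=(d-1)/2$, let $\varphi(x)=\frac{1}{\sqrt{2\pi}}e^{-x^2/2}=-\Phi'(x)$, and set $G_d(t,x):=P\big((R_t^{(d)}-m_dt)/\sqrt t\ge x\big)$, whose density in $x$ is $g_d(t,x):=\sqrt t\,f_d(t,m_dt+x\sqrt t)$. I expect to prove an expansion
\[
g_d(t,x)=\varphi(x)\left(1+\frac{b_d\,x}{\sqrt t}+O\!\left(\tfrac1t\right)\right)
\]
for a constant $b_d$, with the error controlled uniformly enough in $x$. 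Integrating from $x$ to $\infty$ and using the integration by parts identity $\int_x^\infty y\varphi(y)\,{\rm d}y=\varphi(x)$ (since $\varphi'=-y\varphi$) then yields
\[
G_d(t,x)-\Phi(x)=\frac{b_d\,\varphi(x)}{\sqrt t}+O\!\left(\tfrac1t\right).
\]
Thus \eqref{eq:berry-1} follows from $\varphi(x)\le\varphi(0)$, and the sharpness \eqref{eq:berry-2} follows by evaluating at $x=0$ once I check $b_d>0$ for $d=2$ and for odd $d$.

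For odd $d=2n+1$ I would apply the Millson formula $n$ times to the one–dimensional Gaussian kernel and multiply by the spherical volume element $(\sinh r)^{d-1}$, arriving at a closed form $f_{2n+1}(t,r)=C_n e^{-n^2t/2}(\sinh r)^{2n}\big(\tfrac{1}{\sinh r}\tfrac{\partial}{\partial r}\big)^n e^{-r^2/(2t)}$. Here $m_d=n$, so $r\approx nt$ is large and I may replace $\tfrac1{\sinh r}$ by $2e^{-r}$ up to super–exponentially small errors; writing $\big(\tfrac1{\sinh r}\partial_r\big)^n e^{-r^2/(2t)}=2^ne^{-nr}Q_n\,e^{-r^2/(2t)}$, the functions $Q_k$ satisfy $Q_0=1$ and $Q_{k+1}=\partial_rQ_k-(k+r/t)Q_k$, and since $\partial_r=t^{-1}\partial_\rho$ in the variable $\rho=r/t$ the derivative terms are of lower order. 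Hence the leading part of $Q_n$ is $(-1)^n\prod_{k=0}^{n-1}(\rho+k)$. The exponentials $e^{-n^2t/2}$, $(\sinh r)^{2n}\sim 2^{-2n}e^{2nr}$, the factor $2^ne^{-nr}$ and the Gaussian combine, after $r=nt+x\sqrt t$, to produce exactly $e^{-x^2/2}$, while expanding $\prod_{k=0}^{n-1}(n+k+x/\sqrt t)$ gives the first correction with coefficient $c_n:=\sum_{k=0}^{n-1}\frac1{n+k}=\sum_{j=n}^{2n-1}\frac1j>0$. This identifies $b_d=c_n>0$ and proves \eqref{eq:berry-2} for odd $d$.

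For $d=2$ I would instead start from the McKean representation $p_2(t,r)=\frac{\sqrt2\,e^{-t/8}}{(2\pi t)^{3/2}}\int_r^\infty\frac{s\,e^{-s^2/(2t)}}{\sqrt{\cosh s-\cosh r}}\,{\rm d}s$ and analyse the integral by Laplace's method at the integrable endpoint $s=r$. Substituting $s=r+u$ and using $\cosh(r+u)-\cosh r\sim\frac12 e^r(e^u-1)$ together with $e^{-s^2/(2t)}\sim e^{-r^2/(2t)}e^{-(r/t)u}$, the mass concentrates on $u=O(1)$ and the integral reduces to $A:=\int_0^\infty\frac{e^{-(r/t)u}}{\sqrt{e^u-1}}\,{\rm d}u$. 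With $r/t=\frac12+x/\sqrt t$ one has $A=2-\frac{x}{\sqrt t}\,4(1-\log2)+O(1/t)$, where $\int_0^\infty\frac{e^{-u/2}}{\sqrt{e^u-1}}{\rm d}u=2$ and $\int_0^\infty\frac{u\,e^{-u/2}}{\sqrt{e^u-1}}{\rm d}u=4(1-\log2)$; combined with the factor $r/t=\frac12+x/\sqrt t$ from the prefactor $r$, this gives $g_2(t,x)=\frac rt\,A\,\varphi(x)=\varphi(x)\big(1+\frac{2\log2}{\sqrt t}x+O(1/t)\big)$, i.e.\ $b_2=2\log2>0$, whence \eqref{eq:berry-2} for $d=2$. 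For even $d=2k\ge4$, applying the Millson formula $k-1$ times to this representation keeps an integral of the same type; its Laplace asymptotics again give the displayed expansion with some bounded $b_d$, which is all that \eqref{eq:berry-1} requires (no sharpness is claimed here).

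The remaining work is to make the error uniform in $x$ for \eqref{eq:berry-1}: for $|x|$ of order $\sqrt t$ or larger (so that $r$ is near $0$ or very large) both $G_d(t,x)$ and $\Phi(x)$ lie within $O(1/\sqrt t)$ of their limits $0$ and $1$, so their difference is negligible there, while on the moderate range the pointwise expansion integrates to an $O(1/t)$ error. I expect the principal obstacle to be the even–dimensional case: controlling the Laplace asymptotics of the singular McKean–type integral uniformly in $x$, and extracting the $t^{-1/2}$ coefficient rather than merely the leading order, is markedly more delicate than the odd case where the density is fully explicit. A minor point is that \eqref{eq:berry-2} is asserted for all $t\ge1$; the asymptotics give positivity of $G_d(t,0)-\Phi(0)$ for large $t$, and one must verify by a direct sign analysis of the exact formula that $G_d(t,0)>\Phi(0)=1/2$ on the remaining compact $t$–range, so that $c_2=\inf_{t\ge1}\sqrt t\,(G_d(t,0)-\Phi(0))>0$.
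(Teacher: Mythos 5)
Your route is genuinely different from the paper's, and the explicit computations you give are correct. You aim at a local, Edgeworth-type expansion of the \emph{density}, $g_d(t,x)=\varphi(x)\bigl(1+b_dx/\sqrt t+O(1/t)\bigr)$ with $\varphi$ the standard normal density, and then integrate. The paper never expands the density: it works at the level of the distribution function, using the Millson formula together with integration by parts to reduce the odd-dimensional tail probability \emph{exactly} to the three-dimensional one at time $n^2t$ plus nonnegative boundary terms, and the even-dimensional one to explicit integrals of $(\cosh s-\cosh T)^{k-1/2}$; sharpness for $d=2$ is then obtained by a direct sign analysis of an exact integrand rather than by extracting an asymptotic coefficient. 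Your constants are consistent with the paper's exact formulas ($b_3=1$, $b_2=2\log 2$, and $b_{2n+1}=\sum_{j=n}^{2n-1}j^{-1}$ matches the paper's decomposition, whose rescaled $d=3$ term alone contributes $1/(n\sqrt t)$). What your approach would buy is more: an explicit first-order correction $b_d\varphi(x)/\sqrt t$ for \emph{every} $d$, which would in particular settle sharpness for even $d\ge4$ --- a case the paper explicitly leaves open. What the paper's approach buys is rigour at low cost: everything is an identity plus elementary bounds, with no uniform-in-$x$ asymptotic error control to set up.

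Two places where your plan still needs real work. First, for even $d\ge4$ you propose to apply $(\sinh r)^{-1}\partial_r$ repeatedly to the McKean-type integral $\int_r^\infty s e^{-s^2/(2t)}(\cosh s-\cosh r)^{-1/2}\,{\rm d}s$; naive differentiation under the integral sign produces the non-integrable singularity $(\cosh s-\cosh r)^{-3/2}$, so each application must be preceded by an integration by parts in $s$, trading the derivative for a higher power of $(\cosh s-\cosh r)$ plus a boundary term --- this is precisely what the paper's identity \eqref{eq:ind-3} accomplishes, and without it your claim that the representation ``keeps an integral of the same type'' is not yet justified. Second, for \eqref{eq:berry-1} the $O(1/t)$ in the density expansion must actually be of the form $t^{-1}(1+|x|^{k})\varphi(x)$ so that it integrates to $O(1/t)$, and the large-deviation bound $P\bigl(R_t^{(d)}\le(\tfrac{d-1}{2}-\epsilon)t\bigr)=O(e^{-ct})$ needed on the range $x\lesssim-\epsilon\sqrt t$ has to be supplied separately (it follows from \eqref{eq:heat-bound}). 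Neither point is fatal, but both are currently only gestured at.
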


Theorem \ref{thm:berry} provides the convergence rate $t^{-1/2}$,  
which is consistent with the standard Berry-Esseen theorem 
for i.i.d.\ random variables (see, e.g., \cite[Theorem 3.4.17]{D19}). 

We explain our approach to Theorem \ref{thm:berry}. 
According to \eqref{eq:radial} and \eqref{eq:linear}, 
we can regard $R_t^{(d)}$ as the Brownian motion with linear drift  asymptotically. 
However, 
we have no information about the convergence rate in distribution of  
\eqref{eq:linear}. 
Even though we also know the matching bound on the transition density function of $X$ 
(see \eqref{eq:heat-bound} below), 
this bound  is insufficient 
for the estimate of the distribution which would be necessary in the proof of \eqref{eq:berry-1}.
Instead of using \eqref{eq:radial} or \eqref{eq:heat-bound}, 
we repeatedly use the Millson formula (see \eqref{eq:millson} below) 
together with the integration by parts formula. 
By the Millson formula, 
we reduce the calculation of the distribution under consideration 
to $d=2$ or $d=3$. 
At present, the validity of \eqref{eq:berry-2} is unavailable for general even dimensions 
because of the difficulty in calculation. 

Concerning the law of large numbers \eqref{eq:linear}, 
Cammarota-De Gregorio-Macci \cite[Proposition 3.2]{CDM14} and Hirao \cite[Theorem 1.1]{H11} 
proved the large deviation principle for the radial part by using \eqref{eq:heat-bound}. 
Moreover, Cammarota-De Gregorio-Macci \cite[Section 3, p.1560--1563]{CDM14} 
proved the moderation deviation principle, 
together with the exponential decay order of the distribution related to the asymptotic normality. 
Theorem \ref{thm:berry} establishes the asymptotic normality in the suitable scaling factor.

Our motivation of this work lies in clarifying 
how the structures of the volume and spectral determine 
the long time behavior of a symmetric Markov process. 
Anker-Setti \cite[Theorem 2]{AS92} revealed the concentration behavior in distribution 
of the Brownian motion on a complete and non-compact Riemannian manifold 
for which the volume growth rate is exponential 
and the bottom of the $L^2$-spectrum of the Laplacian is strictly positive. 
In particular, this result characterizes the linear growth rate of  the radial part of the Brownian motion 
in terms of the exponential volume growth rate and the bottom of the $L^2$-spectrum.
For the Riemannian manifold with a pole, 
Grigor'yan-Hsu \cite[Theorem 4.1]{GH09} determined the linear growth rate exactly. 
Theorem \ref{thm:berry} is an attempt to provide the second order asymptotic behavior of 
the linear growth phenomena of a symmetric Markov process in a quantitative way.

We close this introduction with some words on contents and notations.
The proof of Theorem \ref{thm:berry} is given in Section \ref{sect:berry}. 
A lemma on some elementary calculus is postponed to Appendix.
For a fixed constant $T>0$,  
let $f_1(t)$ and $f_2(t)$ be positive functions defined on $[T,\infty)$. 
We then write $f_1(t)\sim f_2(t)$ if $f_1(t)/f_2(t)\rightarrow 1$ as $t\rightarrow \infty$. 
Let $S$ be a set, and let $g_1(s)$ and $g_2(s)$ be positive functions defined on $S$. 
We then write $g_1(s)\lesssim g_2(s)$ if 
there exists a constant $c>0$ such that 
$g_1(s)\le cg_2(s)$  for all $s\in S$. 
We further write $g_1(s)\asymp g_2(s)$ 
if $g_1(s)\lesssim g_2(s)$ and $g_2(s)\lesssim g_1(s)$.

\section{Proof of Theorem \ref{thm:berry}}\label{sect:berry}
In this section, we first recall the Millson formula 
and the estimates of the transition density function of 
the Brownian motion on ${\mathbb H}^d$. 
We then prove Theorem \ref{thm:berry} 
for odd dimensions and even dimensions in this order. 
 
\subsection{Preliminaries}
Let $d\ge 2$. 
The $d$-dimensional hyperbolic space ${\mathbb H}^d$ is 
a spherically symmetric Riemannian manifold with the Riemannian distance given by 
$$
{\rm d}s^2={\rm d}r^2+(\sinh r)^2\,{\rm d}\theta^2.
$$
Here ${\rm d}\theta^2$ is the distance on the $(d-1)$-dimensional surface 
$S^{d-1}=\{\theta\in {\mathbb R}^d \mid |\theta|=1\}$. 
We write $\omega_d=2\pi^{d/2}/\Gamma(d/2)$ for the surface area of $S^{d-1}$. 
Let $d$ and ${\rm d}v$ denote 
the associated distance function and volume measure, respectively.

Let $X=(\{X_t\}_{t\geq 0},\{P_x\}_{x\in {\mathbb H}^d})$ 
be the Brownian motion on ${\mathbb H}^d$ generated by $\Delta/2$, 
the half of the Laplace-Beltrami operator on ${\mathbb H}^d$.
Then there exists a Borel measurable function 
$p_d(t,x,y):(0,\infty)\times {\mathbb H}^d\times {\mathbb H}^d \to (0,\infty)$ 
such that 
$$
P_x(X_t\in A)=\int_A p_d(t,x,y)\,v({\rm d}y), 
\quad x\in {\mathbb H}^d, \ t>0, \ A\in {\cal B}({\mathbb H}^d).
$$
Namely, $p_d(t,x,y)$ is the transition density function of $X$. 
Moreover, 
there exists a Borel measurable function $q_d(t,r):(0,\infty)\times (0,\infty)\to (0,\infty)$ 
such that $p_d(t,x,y)=q_d(t,d(x,y))$ for any $x,y\in {\mathbb H}^d$ and $t>0$, and 
\begin{equation}\label{eq:dist-radial}
P_x(d(x,X_t)\in B)
=\omega_d \int_B q_d(t,r) \sinh^{d-1}(r) \,{\rm d}r, \quad t>0, \ B\in {\cal B}([0,\infty))
\end{equation}
(see, e.g., \cite[\S 2.2]{CDM14} for details).

It is known that 
\begin{equation}\label{eq:d=2}
q_2(t,r)=\frac{2^{1/2}e^{-t/8}}{(2\pi t)^{3/2}}
\int_r^{\infty}\frac{s e^{-s^2/(2t)}}{(\cosh s-\cosh r)^{1/2}}\,{\rm d}s
\end{equation}
and 
\begin{equation}\label{eq:d=3}
q_3(t,r)=\frac{e^{-t/2}}{(2\pi t)^{3/2}}\frac{r}{\sinh r}e^{-r^2/(2t)}
\end{equation}
(see, e.g., \cite[Section 5.7]{D92} or \cite[Section 2]{G96} and references therein). 
The Millson formula is a recursive relation 
between the transition density functions:
\begin{equation}\label{eq:millson}
q_d(t,r)=-\frac{e^{-(d-2)t/2}}{2\pi \sinh r}\frac{\partial q_{d-2}}{\partial r}(t,r), \quad t>0, \ r>0
\end{equation}
(see, e.g., \cite{DM88, GN98}, \cite[Section 5.7]{D92} and references therein).
We also see by \cite[Theorem 5.7.2]{D92} that 
\begin{equation}\label{eq:heat-bound}
q_d(t,r)
\asymp \frac{1}{t^{d/2}}
\exp\left(-\frac{(d-1)^2}{8}t-\frac{d-1}{2r}-\frac{r^2}{2t}\right)(1+r+t)^{(d-3)/2}(1+r), \quad t>0, \ r>0.
\end{equation}

\subsection{Odd dimensions}
In this subsection, we first prove Theorem {\rm \ref{thm:berry}} for $d=3$. 
Using this assertion, we next prove Theorem {\rm \ref{thm:berry}} for general odd dimensions.
\begin{proof}[Proof of Theorem {\rm \ref{thm:berry}} for $d=3$]
We first prove \eqref{eq:berry-1}. 
We write $R_t=R_t^{(3)}$ for simplicity. 
Since $R_t\geq 0$ for any $t\geq 0$ and $\omega_3=4\pi$,  
it follows by  \eqref{eq:d=3} that for any $x\in {\mathbb R}$, 
\begin{equation*}
\begin{split}
P\left(\frac{R_t-t}{\sqrt{t}}\geq x\right)
=P\left(R_t\geq (t+x\sqrt{t})\vee 0\right)
&=\frac{\omega_3}{(2\pi t)^{3/2}}\int_{(t+x\sqrt{t})\vee 0}^{\infty}
e^{-t/2}e^{-u^2/(2t)} u\sinh u\,{\rm d}u\\
&=\frac{1}{\sqrt{2\pi}}\frac{2e^{-t/2}}{t\sqrt{t}}
\int_{(t+x\sqrt{t})\vee 0}^{\infty}
e^{-u^2/(2t)}u\sinh u\,{\rm d}u.
\end{split}
\end{equation*}
Then by the change of variables formula with $u=t+v\sqrt{t}$, 
we obtain 
\begin{equation*}
\begin{split}
&\frac{2e^{-t/2}}{t\sqrt{t}}
\int_{(t+x\sqrt{t})\vee 0}^{\infty}
e^{-u^2/(2t)}u\sinh u\,{\rm d}u\\
&=2e^{-t}\int_{x\vee (-\sqrt{t})}^{\infty}e^{-v^2/2}e^{-v\sqrt{t}}\sinh(t+v\sqrt{t})\,{\rm d}v
+\frac{2e^{-t}}{\sqrt{t}}\int_{x\vee (-\sqrt{t})}^{\infty}e^{-v^2/2}e^{-v\sqrt{t}}v\sinh(t+v\sqrt{t})\,{\rm d}v\\
&=I_1(t,x)+I_2(t,x). 
\end{split}
\end{equation*}
Therefore, 
\begin{equation}\label{eq:dist}
P\left(\frac{R_t-t}{\sqrt{t}}\geq x\right)-\Phi(x)
=\frac{1}{\sqrt{2\pi}}\left\{\left(I_1(t,x)
-\int_x^{\infty}e^{-u^2/2}\,{\rm d}u\right)+I_2(t,x)\right\}.
\end{equation}

We have 
\begin{equation}\label{eq:dist-0}
\begin{split}
I_1(t,x)-\int_{x\vee(-\sqrt{t})}^{\infty}e^{-u^2/2}\,{\rm d}u
&=\int_{x\vee (-\sqrt{t})}^{\infty}e^{-v^2/2}(1-e^{-2(t+v\sqrt{t})})\,{\rm d}v
-\int_{x\vee(-\sqrt{t})}^{\infty}e^{-u^2/2}\,{\rm d}u \\
&=-\int_{x\vee (-\sqrt{t})}^{\infty}e^{-v^2/2}e^{-2(t+v\sqrt{t})}\,{\rm d}v.
\end{split}
\end{equation}
Then by the change of variables formula again with $u=v+2\sqrt{t}$, 
there exists a constant $c_1>0$ such that for any $x\in {\mathbb R}$ and $t\geq 1$, 
\begin{equation}\label{eq:bound}
\int_{x\vee (-\sqrt{t})}^{\infty}e^{-v^2/2}e^{-2(t+v\sqrt{t})}\,{\rm d}v
=\int_{(x+2\sqrt{t})\vee \sqrt{t}}^{\infty}e^{-u^2/2}\,{\rm d}u
\le \int_{\sqrt{t}}^{\infty}e^{-u^2/2}\,{\rm d}u 
\le \frac{c_1e^{-t/2}}{\sqrt{t}}.
\end{equation}
Hence for any $x\in {\mathbb R}$ and $t\ge 1$,
$$
0\ge I_1(t,x)-\int_{x\vee(-\sqrt{t})}^{\infty}e^{-u^2/2}\,{\rm d}u
\ge -\frac{c_1e^{-t/2}}{\sqrt{t}}.
$$
We also note that for any $x\in {\mathbb R}$ and $t\geq 1$, 
$$\int_x^{x\vee (-\sqrt{t})}e^{-u^2/2}\,{\rm d}u
\leq \int_{-\infty}^{-\sqrt{t}}e^{-u^2/2}\,{\rm d}u
\lesssim \frac{e^{-t/2}}{\sqrt{t}}.
$$
By combining two inequalities above, 
there exists a constant $c_2>0$ such that for any $x\in {\mathbb R}$ and $t\ge 1$, 
$$
I_1(t,x)-\int_x^{\infty}e^{-u^2/2}\,{\rm d}u
=\left(I_1(t,x)-\int_{x\vee (-\sqrt{t})}^{\infty}e^{-u^2/2}\,{\rm d}u\right)
-\int_x^{x\vee (-\sqrt{t})}e^{-u^2/2}\,{\rm d}u
\ge -\frac{c_2e^{-t/2}}{\sqrt{t}}.
$$
Therefore, 
\begin{equation}\label{eq:i-bound}
\left|I_1(t,x)-\int_x^{\infty}e^{-u^2/2}\,{\rm d}u\right|
\lesssim \frac{e^{-t/2}}{\sqrt{t}}, \quad t\ge 1.
\end{equation}
Since $(e^{-v^2/2})'=-ve^{-v^2/2}$, we have by the integration by parts formula, 
\begin{equation}\label{eq:dist-2}
\begin{split}
I_2(t,x)
&=\frac{1}{\sqrt{t}}\int_{x\vee (-\sqrt{t})}^{\infty}ve^{-v^2/2}(1-e^{-2(t+v\sqrt{t})})\,{\rm d}v\\
&=\frac{e^{-(x\vee (-\sqrt{t}))^2/2}}{\sqrt{t}}(1-e^{-2(t+(x\vee (-\sqrt{t}))\sqrt{t})})
+2\int_{x\vee (-\sqrt{t})}^{\infty}e^{-v^2/2}e^{-2(t+v\sqrt{t})}\,{\rm d}v\ge 0.
\end{split}
\end{equation}
In particular, we see by \eqref{eq:bound} that 
$$
0\le I_2(t,x)
\leq \frac{1}{\sqrt{t}}\int_0^{\infty}ve^{-v^2/2}\,{\rm d}v
=\frac{1}{\sqrt{t}}.
$$
Combining this with \eqref{eq:dist} and \eqref{eq:i-bound}, 
we get \eqref{eq:berry-1} for $d=3$.

We next prove \eqref{eq:berry-2}. 
If we take $x=0$, 
then by a calculation similar to \eqref{eq:dist-0} and \eqref{eq:bound}, 
it follows that for some $c_3>0$,
$$
I_1(t,0)-\int_0^{\infty}e^{-u^2/2}\,{\rm d}u
=-\int_{2\sqrt{t}}^{\infty}e^{-u^2/2}\,{\rm d}u\ge -\frac{c_3e^{-2t}}{\sqrt{t}}.
$$
We also have for any $t\geq 1$, 
$$
I_2(t,0)
=\frac{1}{\sqrt{t}}\int_0^{\infty}ve^{-v^2/2}\left(1-e^{-2(t+v\sqrt{t})}\right)\,{\rm d}v
\geq \frac{1-e^{-2}}{\sqrt{t}}\int_0^{\infty}ve^{-v^2/2}\,{\rm d}v
=\frac{1-e^{-2}}{\sqrt{t}}.
$$
Hence by \eqref{eq:dist}, 
there exists a constant $c_4>0$ such that for any $t\geq 1$,
$$
P\left(\frac{R_t-t}{\sqrt{t}}\geq 0\right)-\Phi(0)
=\frac{1}{\sqrt{2\pi}}\left\{\left(I_1(t,0)-\int_0^{\infty}e^{-u^2/2}\,{\rm d}u\right)+I_2(t,0)\right\}
\ge \frac{c_4}{\sqrt{t}},
$$
which implies \eqref{eq:berry-2} for $d=3$.
\end{proof}

\begin{proof}[Proof of Theorem {\rm \ref{thm:berry}} for odd dimensions]
Suppose that  $d\ge 3$ is odd. 
Let 
$$
T=T(t,x)=\left(x\sqrt{t}+\frac{d-1}{2}t\right)\vee 0.
$$
Since $R_t^{(d)}\geq 0$, we have by \eqref{eq:dist-radial} and \eqref{eq:millson}, 
\begin{equation}\label{eq:d-dist}
\begin{split}
P\left(\frac{R_t^{(d)}-(d-1)t/2}{\sqrt{t}}\geq x\right)
&=\omega_d\int_T^{\infty}q_d(t,r)\sinh^{d-1}r \,{\rm d}r\\
&=-\frac{\omega_de^{-(d-2)t/2}}{2\pi}
\int_T^{\infty}\frac{\partial q_{d-2}}{\partial r}(t,r)\sinh^{d-2}r \,{\rm d}r.
\end{split}
\end{equation}
Then by the integration by parts formula and \eqref{eq:heat-bound}, 
\begin{equation*}
\begin{split}
-\int_T^{\infty}\frac{\partial q_{d-2}}{\partial r}(t,r)\sinh^{d-2}r \,{\rm d}r
&=\left[-q_{d-2}(t,r)\sinh^{d-2}r\right]_{r=T}^{r=\infty}
+\int_{T}^{\infty}q_{d-2}(t,r)\left(\frac{\partial}{\partial r}\sinh^{d-2}r\right) \,{\rm d}r\\
&=q_{d-2}(t,T)\sinh^{d-2}T
+\int_T^{\infty}q_{d-2}(t,r)\left(\frac{\partial}{\partial r}\sinh^{d-2}r\right) \,{\rm d}r.
\end{split}
\end{equation*}
In the same way, we have 
\begin{equation*}
\begin{split}
\int_T^{\infty}q_{d-2}(t,r)\left(\frac{\partial}{\partial r}\sinh^{d-2}r\right) \,{\rm d}r
&=-\frac{e^{-(d-4)t/2}}{2\pi}\int_T^{\infty}\frac{1}{\sinh r}\frac{\partial q_{d-4}}{\partial r}(t,r)
\left(\frac{\partial}{\partial r}\sinh^{d-2}r\right) \,{\rm d}r\\
&=\frac{e^{-(d-4)t/2}}{2\pi}q_{d-4}(t,T)
\left(\frac{1}{\sinh r}\frac{\partial}{\partial r}\right)\sinh^{d-2}r\Big|_{r=T}\\
&+\frac{e^{-(d-4)t/2}}{2\pi}\int_T^{\infty}q_{d-4}(t,r)
\frac{\partial}{\partial r}\left(\frac{1}{\sinh r}\frac{\partial}{\partial r}\right)\sinh^{d-2}r \,{\rm d}r.
\end{split}
\end{equation*}
Repeating this procedure, 
we see from \eqref{eq:d-dist} that for any $n\geq 2$ with $d\geq 2n$,
\begin{equation}\label{eq:d-dist-1}
\begin{split}
&P\left(\frac{R_t^{(d)}-(d-1)t/2}{\sqrt{t}}\geq x\right)\\
&=\omega_d\sum_{m=1}^{n-1}\prod_{j=1}^m\frac{e^{-(d-2j)t/2}}{2\pi}q_{d-2m}(t,T)
\left(\frac{1}{\sinh r}\frac{\partial}{\partial r}\right)^{m-1}\sinh^{d-2}r\Big|_{r=T}\\
&+\omega_d\prod_{j=1}^{n-1}\frac{e^{-(d-2j)t/2}}{2\pi} 
\int_T^{\infty}
\left(\left(\frac{1}{\sinh r}\frac{\partial}{\partial r}\right)^{n-1}\sinh^{d-2} r\right)q_{d-2n+2}(t,r)\sinh r\,{\rm d}r\\
&=\omega_d\sum_{m=1}^{n-1}\frac{e^{-(d-(m+1))mt/2}}{(2\pi)^m} q_{d-2m}(t,T)
\left(\frac{1}{\sinh r}\frac{\partial}{\partial r}\right)^{m-1}\sinh^{d-2}r\Big|_{r=T}\\
&+\omega_d\frac{e^{-(d-n)(n-1)t/2}}{(2\pi)^{n-1}}
\int_T^{\infty}
\left(\left(\frac{1}{\sinh r}\frac{\partial}{\partial r}\right)^{n-1}\sinh^{d-2} r\right)q_{d-2n+2}(t,r)\sinh r\,{\rm d}r.
\end{split}
\end{equation}

Let us calculate the first term of the last expression  of \eqref{eq:d-dist-1}. 
We now assume that  $x\geq -(d-1)\sqrt{t}/2$ and so $T\ge 0$. 
Then by \eqref{eq:ind-1} and \eqref{eq:ind-2}, 
for any $m$ \ ($1\le m\le n-1$), there exists a constant $c_1>0$ such that 
$$
\left(\frac{1}{\sinh r}\frac{\partial}{\partial r}\right)^{m-1}\sinh^{d-2}r\Big|_{r=T}
\leq c_1e^{(d-m-1)T}.
$$
We also see from  \eqref{eq:heat-bound} that 
\begin{equation*}
\begin{split}
&q_{d-2m}(t,T)\\
&\asymp \frac{1}{t^{(d-2m)/2}}
\exp\left(-\frac{(d-2m-1)^2}{8}t-\frac{(d-2m-1)T}{2}-\frac{T^2}{2t}\right)
(1+T+t)^{(d-2m-3)/2}(1+T).
\end{split}
\end{equation*}
Since there exists a constant $c_2>0$ 
such that for any $t\geq 1$ and $x\geq -(d-1)\sqrt{t}/2$,
\begin{equation*}
\begin{split}
(1+T+t)^{(d-2m-3)/2}(1+T)
&\le (1+T+t)^{(d-2m-1)/2}=(1+x\sqrt{t}+(d-1)t/2+t)^{(d-2m-1)/2}\\
&\le c_2(1+(|x|\sqrt{t})^{(d-2m-1)/2}+t^{(d-2m-1)/2}),
\end{split}
\end{equation*}
we can take a constant $c_3>0$ such that for any $t\geq 1$ and $x\geq -(d-1)\sqrt{t}/2$,
$$
\frac{e^{-x^2/2}}{t^{(d-2m)/2}}(1+T+t)^{(d-2m-3)/2}(1+T)
\leq c_2\left(\frac{e^{-x^2/2}}{t^{d-2m}}+\frac{e^{-x^2/2}|x|^{(d-2m-1)/2}}{t^{(d-2m)/4+1/4}}+\frac{1}{\sqrt{t}}\right)
\leq \frac{c_3}{\sqrt{t}}.
$$
At the last inequality, we used the fact that 
$\sup_{x\in {\mathbb R}}e^{-x^2/2}|x|^{(d-2m-1)/2}<\infty$. 
By noting that 
$$
\exp\left(-\frac{(d-2m-1)^2}{8}t-\frac{(d-2m-1)T}{2}-\frac{T^2}{2t}\right)e^{(d-m-1)T}
=e^{-x^2/2}e^{(d-(m+1))mt/2},
$$
there exist  positive constants $c_4$ and $c_5$ such that 
for any $t\geq 1$ and $x\geq -(d-1)\sqrt{t}/2$,
\begin{equation}\label{eq:bound-large}
\begin{split}
&\frac{e^{-(d-(m+1))mt/2}}{(2\pi)^m} q_{d-2m}(t,T)
\left(\frac{1}{\sinh r}\frac{\partial}{\partial r}\right)^{m-1}\sinh^{d-2}r\Big|_{r=T}\\
&\le \frac{c_4e^{-x^2/2}}{t^{(d-2m)/2}}(1+T+t)^{(d-2m-3)/2}(1+T)
\le \frac{c_5}{\sqrt{t}}.
\end{split}
\end{equation}

On the other hand, if $x\leq -(d-1)\sqrt{t}/2$, 
then $T=0$ so that by \eqref{eq:ind-1} and \eqref{eq:ind-2},
$$
\left(\frac{1}{\sinh r}\frac{\partial}{\partial r}\right)^{m-1}\sinh^{d-2}r\Big|_{r=0}=0.
$$
Combining this with \eqref{eq:bound-large}, 
we have for any $t\geq 1$,
$$
\sup_{x\in {\mathbb R}} \omega_d\sum_{m=1}^{n-1}
\frac{e^{-(d-(m+1))mt/2}}{(2\pi)^m}
q_{d-2m}(t,T)
\left(\frac{1}{\sinh r}\frac{\partial}{\partial r}\right)^{m-1}\sinh^{d-2}r\Big|_{r=T}
\lesssim \frac{1}{\sqrt{t}}.
$$
In particular, if we take $d=2n+1$, then 
\begin{equation}\label{eq:bound-all}
\sup_{x\in {\mathbb R}} \omega_d\sum_{m=1}^{n-1}
\frac{e^{-(2n-m)mt/2}}{(2\pi)^m}
q_{2n+1-2m}(t,T)
\left(\frac{1}{\sinh r}\frac{\partial}{\partial r}\right)^{m-1}\sinh^{2n-1}r\Big|_{r=T}
\lesssim \frac{1}{\sqrt{t}}.
\end{equation}

We turn to the second term of the last expression  of \eqref{eq:d-dist-1}. 
Let $d=2n+1$ so that $T=(x\sqrt{t}+nt)\vee 0$. 
By \eqref{eq:d=3} and \eqref{eq:deri-0}, we have 
\begin{equation*}
\begin{split}
\left(\left(\frac{1}{\sinh r}\frac{\partial}{\partial r}\right)^{n-1}\sinh^{2n-1}r\right)q_3(t,r)\sinh r 
&=\frac{(2n-1)!!}{n}\sinh(nr)\frac{e^{-t/2}}{(2\pi t)^{3/2}}\frac{r}{\sinh r}
e^{-r^2/(2t)}\sinh r\\
&=\frac{(2n-1)!!}{n}\frac{e^{-t/2}}{(2\pi t)^{3/2}}
re^{-r^2/(2t)}\sinh(nr)
\end{split}
\end{equation*}
and thus
\begin{equation*}
\begin{split}
&\frac{e^{-((2n+1)-n)(n-1)t/2}}{(2\pi)^{n-1}}
\int_T^{\infty}
\left(\left(\frac{1}{\sinh r}\frac{\partial}{\partial r}\right)^{n-1}\sinh^{2n-1}r\right)q_3(t,r)\sinh r\,{\rm d}r\\
&=\frac{e^{-(n^2-1)t/2}}{(2\pi)^{n-1}}\frac{(2n-1)!!}{n}  \frac{e^{-t/2}}{(2\pi t)^{3/2}}
\int_T^{\infty}re^{-r^2/(2t)}\sinh(nr)\,{\rm d}r\\
&=\frac{1}{(2\pi)^{n-1}(2\pi t)^{3/2}}\frac{(2n-1)!!}{n}  
\int_T^{\infty}re^{-n^2t/2}e^{-r^2/(2t)}\sinh(nr)\,{\rm d}r.
\end{split}
\end{equation*}
Since
$$
\omega_{2n+1}=\frac{(2\pi)^{n-1} 4\pi}{(2n-1)!!}
=\frac{(2\pi)^{n-1}\omega_3}{(2n-1)!!},
$$
we get by \eqref{eq:d=3},
\begin{equation*}
\begin{split}
&
\omega_{2n+1}\frac{e^{-((2n+1)-n)(n-1)t/2}}{(2\pi)^{n-1}}
\int_T^{\infty}
\left(\left(\frac{1}{\sinh r}\frac{\partial}{\partial r}\right)^{n-1} \sinh^{2n-1}r\right)q_3(t,r)\sinh r\,{\rm d}r\\
&=\frac{\omega_3}{n(2\pi t)^{3/2}}
\int_T^{\infty}re^{-n^2t/2}e^{-r^2/(2t)}\sinh(nr)\,{\rm d}r\\
&=\frac{\omega_3}{(2\pi n^2t)^{3/2}}
\int_{(x\sqrt{n^2 t}+n^2 t)\vee 0}^{\infty}
\frac{u}{\sinh u}e^{-n^2t/2}e^{-u^2/(2n^2t)}\sinh^2 u\,{\rm d}u\\
&=\frac{\omega_3}{(2\pi n^2t)^{3/2}}\int_{(x\sqrt{n^2 t}+n^2 t)\vee 0}^{\infty}
q_3(n^2t,u)\sinh^2 u\,{\rm d}u
=P\left(\frac{R_{n^2 t}^{(3)}-n^2t}{\sqrt{n^2t}}\geq x\right).
\end{split}
\end{equation*}
At the second equality above, we used the change of variable formula with $u=nr$. 
Hence by \eqref{eq:d-dist-1},
\begin{equation}\label{eq:distance}
\begin{split}
&P\left(\frac{R_t^{(2n+1)}-nt}{\sqrt{t}}\geq x\right)-\Phi(x)\\
&=\omega_{2n+1}\sum_{m=1}^{n-1} \frac{e^{-(2n-m)mt/2}}{(2\pi)^m}
q_{2n+1-2m}(t,T)
\left(\frac{1}{\sinh r}\frac{\partial}{\partial r}\right)^{m-1}\sinh^{2n-1}r\Big|_{r=T}\\
&+P\left(\frac{R_{n^2 t}^{(3)}-n^2t}{\sqrt{n^2t}}\geq x\right)
-\Phi(x). 
\end{split}
\end{equation}
Since Theorem \ref{thm:berry} is already proved for $d=3$, 
we see by \eqref{eq:bound-all} that 
$$
\sup_{x\in {\mathbb R}}\left|P\left(\frac{R_t^{(2n+1)}-nt}{\sqrt{t}}\geq x\right)-\Phi(x)\right|
\lesssim \frac{1}{\sqrt{t}}, 
\quad t\geq 1.
$$
The proof of \eqref{eq:berry-1} is complete for general odd dimensions.

We next prove \eqref{eq:berry-2} for general odd dimensions. 
It follows by \eqref{eq:ind-1} and \eqref{eq:ind-2}, that, 
for any $m$ \ ($1\le m\le n-1$), there exists a constant $c_6>0$ such that 
$$
\left(\frac{1}{\sinh r}\frac{\partial}{\partial r}\right)^{m-1}\sinh^{2n-1}r\Big|_{r=nt}
\ge c_6e^{(2n-m)nt}.
$$
Hence as in the proof of \eqref{eq:bound-large}, 
we have for some $c_7>0$, 
\begin{equation}\label{eq:bound-lower}
\sum_{m=1}^{n-1}
\frac{e^{-(2n-m)mt/2}}{(2\pi)^m}
q_{2n+1-m}(t,nt)
\left(\frac{1}{\sinh r}\frac{\partial}{\partial r}\right)^{m-1}\sinh^{2n-1}r\Big|_{r=nt}
\ge\frac{c_7}{\sqrt{t}}, \quad t\ge 1.
\end{equation}
Since \eqref{eq:berry-2} is already proved for $d=3$, 
we see from \eqref{eq:distance} (with $x=0$) and \eqref{eq:bound-lower} that 
there exists a constant $c_8>0$ such that 
$$
P\left(\frac{R_t^{(2n+1)}-nt}{\sqrt{t}}\geq 0\right)
-\Phi(0)
\ge \frac{c_8}{\sqrt{t}}, 
\quad t\geq 1.
$$
The proof of \eqref{eq:berry-2} is complete for general odd dimensions.
\end{proof}

\subsection{Even dimensions}
In this subsection, we establish Theorem \ref{thm:berry} for even dimensions.
Suppose that $d=2n$ for some $n\ge 1$.  
Then $T=(x\sqrt{t}+(n-1/2)t)\vee 0$. By \eqref{eq:d-dist-1},
\begin{equation}\label{eq:i+ii}
\begin{split}
&P\left(\frac{R_t^{(2n)}-(n-1/2)t}{\sqrt{t}}\ge x\right)\\
&=\omega_{2n}\sum_{m=1}^{n-1}\frac{e^{-m(n-(m+1)/2)t}}{(2\pi)^m}q_{2n-2m}(t,T)
\left(\frac{1}{\sinh r}\frac{\partial}{\partial r}\right)^{m-1}\sinh^{2n-2}r\Big|_{r=T}\\
&+\omega_{2n}\frac{e^{-n(n-1)t/2}}{(2\pi)^{n-1}} 
\int_T^{\infty}
\left(\left(\frac{1}{\sinh r}\frac{\partial}{\partial r}\right)^{n-1}\sinh^{2n-2} r\right)q_2(t,r)\sinh r\,{\rm d}r\\
&=J_1(t,x)+J_2(t,x),
\end{split}
\end{equation}
where we make the convention $\sum_{k=1}^0=0$ so that $J_1(t,x)=0$ for $n=1$.
Therefore,
\begin{equation}\label{eq:d-dist-even}
P\left(\frac{R_t^{(2n)}-(n-1/2)t}{\sqrt{t}}\ge x\right)-\Phi(x)
=J_1(t,x)+\left(J_2(t,x)-\Phi(x)\right).
\end{equation}
We now estimate the right hand side above.

\begin{lem}\label{lem:i-bound}
There exists a constant $c>0$ such that 
for any $t\ge 1$, 
$$
\sup_{x\in {\mathbb R}}J_1(t,x)\le \frac{c}{\sqrt{t}}.
$$
\end{lem}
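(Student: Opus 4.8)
The plan is to handle $J_1(t,x)$ in exactly the same way as the boundary sum \eqref{eq:bound-all} was handled in the odd-dimensional case, since $J_1$ is precisely its even-dimensional counterpart. First I would dispose of the range $x\le -(2n-1)\sqrt t/2$: there $T=0$, and because $1\le m\le n-1$ forces $\sinh^{2n-2}r$ to vanish at $r=0$ to order $2n-2>2(m-1)$, the appendix estimates \eqref{eq:ind-1} and \eqref{eq:ind-2} give $\left(\frac{1}{\sinh r}\frac{\partial}{\partial r}\right)^{m-1}\sinh^{2n-2}r\big|_{r=0}=0$, so every summand of $J_1$ vanishes. It then remains to bound $J_1(t,x)$ for $x\ge -(2n-1)\sqrt t/2$, where $T=x\sqrt t+(n-1/2)t\ge 0$.

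For such $x$ and each fixed $m$ I would estimate the $m$-th summand by multiplying three bounds: the factor $\left(\frac{1}{\sinh r}\frac{\partial}{\partial r}\right)^{m-1}\sinh^{2n-2}r\big|_{r=T}\le c\,e^{(2n-m-1)T}$ from \eqref{eq:ind-1}--\eqref{eq:ind-2}, the Gaussian bound \eqref{eq:heat-bound} for $q_{2n-2m}(t,T)$, and the prefactor $e^{-m(n-(m+1)/2)t}$. The crux of the whole lemma is to collect the resulting exponential factors and verify that they reduce to $e^{-x^2/2}$. The $T$-linear contributions combine to $(n-1/2)T$; inserting $T=x\sqrt t+(n-1/2)t$ into $(n-1/2)T-\frac{T^2}{2t}$ cancels the $x\sqrt t$ cross term and produces $\frac{(n-1/2)^2}{2}t-\frac{x^2}{2}$, while the coefficient of $t$ in the remaining exponent,
$$
-m\left(n-\frac{m+1}{2}\right)-\frac{(2n-2m-1)^2}{8}+\frac{(n-1/2)^2}{2},
$$
vanishes identically. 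This exact cancellation, the even-dimensional analogue of the exponent identity preceding \eqref{eq:bound-large}, is the step where I expect the main bookkeeping difficulty to lie; it is nonetheless a finite polynomial identity in $m,n$ and is checked by a direct expansion.

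Once this is in place, the $m$-th summand is $\asymp t^{-(n-m)}e^{-x^2/2}(1+T+t)^{(2n-2m-3)/2}(1+T)$, and I would finish exactly as in the odd case. Bounding $(1+T+t)^{(2n-2m-3)/2}(1+T)\le(1+T+t)^{(2n-2m-1)/2}$ and expanding $1+T+t\le 1+|x|\sqrt t+(n+1/2)t$, each summand is dominated by the three terms $t^{-(n-m)}e^{-x^2/2}$, $t^{-(2n-2m+1)/4}e^{-x^2/2}|x|^{(2n-2m-1)/2}$, and $t^{-1/2}e^{-x^2/2}$. Since $m\le n-1$ gives $n-m\ge 1$ and $(2n-2m+1)/4\ge 3/4$, and since $\sup_{x}e^{-x^2/2}|x|^{(2n-2m-1)/2}<\infty$, all three are $\lesssim t^{-1/2}$ for $t\ge 1$. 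Summing over the finitely many indices $1\le m\le n-1$ yields $\sup_{x}J_1(t,x)\lesssim t^{-1/2}$, as claimed. The only genuinely new content relative to the odd-dimensional argument is the exponent computation in the second paragraph; the rest is a verbatim adaptation.
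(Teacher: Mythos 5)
Your proposal is correct and is exactly the argument the paper intends: the paper omits the proof of Lemma \ref{lem:i-bound}, stating only that ``the argument for \eqref{eq:bound-all} still works,'' and that argument (the derivation of \eqref{eq:bound-large}, which the paper in fact carries out for general $d\ge 2n$ before specializing to $d=2n+1$) is precisely what you reproduce for $d=2n$, including the exponent identity $-\frac{(d-2m-1)^2}{8}t+\frac{d-1}{2}T-\frac{T^2}{2t}+\frac{(d-(m+1))m}{2}t=-\frac{x^2}{2}$ and the vanishing of the boundary terms at $T=0$. No gaps.
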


We omit the proof of Lemma \ref{lem:i-bound} 
because the argument for \eqref{eq:bound-all} still works.

\begin{lem}\label{lem:ii-bound}
There exists a constant $c>0$ such that for any $t\geq 1$, 
$$
\sup_{x\in {\mathbb R}}\left|J_2(t,x)-\Phi(x)\right|
\le \frac{c}{\sqrt{t}}.
$$
\end{lem}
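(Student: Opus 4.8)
The plan is to mirror the odd-dimensional argument, the difference being that the base density is now $q_2$ from \eqref{eq:d=2} rather than the closed-form $q_3$ from \eqref{eq:d=3}; the extra integral appearing in \eqref{eq:d=2} is precisely what makes the even case harder and, ultimately, what blocks the sharpness lower bound. First I would substitute \eqref{eq:d=2} into the definition of $J_2(t,x)$ in \eqref{eq:i+ii} and use the even analogue of \eqref{eq:deri-0} (the calculus behind \eqref{eq:ind-1}--\eqref{eq:ind-2}) to write $\left(\frac{1}{\sinh r}\frac{\partial}{\partial r}\right)^{n-1}\sinh^{2n-2}r$ explicitly as a polynomial in $\cosh r$ of degree $n-1$, equivalently a linear combination of $\cosh(kr)$ for $0\le k\le n-1$ (for $n=1$ this polynomial is constant, and $J_2$ is then exactly $P((R_t^{(2)}-t/2)/\sqrt{t}\ge x)$). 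The resulting expression for $J_2$ is a double integral in $r$ and in the auxiliary variable $s$ coming from \eqref{eq:d=2}.

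Next I would apply Fubini to exchange the order of integration, so that for fixed $s$ the variable $r$ runs over $[T,s]$, and evaluate the inner integral
$$
\int_T^s \frac{\left(\frac{1}{\sinh r}\frac{\partial}{\partial r}\right)^{n-1}\sinh^{2n-2}r}{(\cosh s-\cosh r)^{1/2}}\,\sinh r\,{\rm d}r
$$
in closed form. The substitution $w=\cosh r$ turns $\frac{1}{\sinh r}\frac{\partial}{\partial r}$ into $\frac{\partial}{\partial w}$ and the integrand into (polynomial in $w$)$/(\cosh s-w)^{1/2}$, whose antiderivative is elementary; this is the step I would isolate as a lemma for the Appendix. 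The outcome is a single integral
$$
J_2(t,x)=C_n\,e^{-t/8}\int_T^{\infty} s\,e^{-s^2/(2t)}\,\Psi_n(\cosh s,\cosh T)\,{\rm d}s,
$$
where $\Psi_n(\cosh s,\cosh T)$ grows like $e^{(n-1/2)s}$ as $s\to\infty$, the exponential rate matching the centering $(n-1/2)t$ in $T=(x\sqrt t+(n-1/2)t)\vee 0$.

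I would then run the Laplace/Gaussian asymptotics exactly as in the $d=3$ computation. Writing $s=(n-1/2)t+\sigma$, the saddle sits at $\sigma=0$, and the prefactor $e^{-n(n-1)t/2}$ (absorbed in $C_n$), the factor $e^{-t/8}$, and the leading exponential $e^{(n-1/2)s}$ combine with $e^{-s^2/(2t)}$ to leave $e^{-\sigma^2/(2t)}$, all growing exponentials cancelling because $\tfrac12(n-1/2)^2=\tfrac{n(n-1)}{2}+\tfrac18$. Replacing $s$ by its leading value $(n-1/2)t$ in the slowly varying prefactor and substituting $\sigma=v\sqrt{t}$ reproduces $\frac{1}{\sqrt{2\pi}}\int_x^{\infty}e^{-v^2/2}\,{\rm d}v=\Phi(x)$, consistently with \eqref{eq:clt}, \eqref{eq:d-dist-even} and Lemma \ref{lem:i-bound}. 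The remaining, and main, task is to bound $J_2(t,x)-\Phi(x)$ by $c/\sqrt t$ \emph{uniformly} in $x$.

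The hard part is precisely this uniform error control, and I would split it into three contributions. First is the boundary layer near $s=T$, where $\Psi_n$ vanishes and the square-root factor $(\cosh s-\cosh T)^{1/2}$ behaves like $(1-e^{T-s})^{1/2}$: this is the even analogue of the term $I_2$ in the $d=3$ proof, it is the genuinely $O(1/\sqrt t)$ contribution, and it is also why the sign analysis required for \eqref{eq:berry-2} is unavailable for even $d$. Second is the replacement of $s$ by $(n-1/2)t$ in the prefactor, which I would control using $\sup_{x}e^{-x^2/2}|x|^{k}<\infty$ as in \eqref{eq:bound-large}. Third are the subleading harmonics $\cosh(kr)$ with $k<n-1$, which carry smaller exponential rates and hence leave exponentially small terms after the same cancellation. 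Finally I would treat the degenerate range $x\le-(d-1)\sqrt t/2$, where $T=0$, separately: there both $J_2(t,x)$ and $\Phi(x)$ are dominated by a multiple of $e^{-t/8}/\sqrt t$, so the bound is trivial. Collecting these estimates yields Lemma \ref{lem:ii-bound}, and together with Lemma \ref{lem:i-bound} and \eqref{eq:d-dist-even} it establishes \eqref{eq:berry-1} for all even $d$.
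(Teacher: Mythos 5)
Your overall route coincides with the paper's: substitute \eqref{eq:d=2} into $J_2$, apply Fubini, evaluate the inner $r$-integral in closed form (the paper does this by repeated integration by parts, which is the same calculation as your $w=\cosh r$ substitution and produces the boundary terms $(\cosh s-\cosh T)^{k-1/2}$ collected in its $K_1$, plus the exact leading term $(\cosh s-\cosh T)^{n-1/2}$ via $\left(\frac{1}{\sinh r}\frac{\partial}{\partial r}\right)^{2n-2}\sinh^{2n-2}r=(2n-2)!$), and then run the Gaussian change of variables $s=u\sqrt t+(n-1/2)t$. Your three error sources match the paper's $M_2$ (the $(1-\cosh T/\cosh(u\sqrt t+(n-1/2)t))^{n-1/2}$ boundary-layer factor, which is the genuine $O(1/\sqrt t)$ term), $L_1$ (the $u\sqrt t$ part of the prefactor), and $M_1$/$K_1$ (exponentially small, resp.\ $O(1/\sqrt t)$). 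The exponent identity $\tfrac12(n-\tfrac12)^2=\tfrac{n(n-1)}{2}+\tfrac18$ you use to cancel the growing exponentials is exactly the cancellation in the paper's computation of $b_n(t)L_2$.

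There is, however, one genuine error: your treatment of the range $x\le-(n-1/2)\sqrt t$ (where $T=0$). You claim that there ``both $J_2(t,x)$ and $\Phi(x)$ are dominated by a multiple of $e^{-t/8}/\sqrt t$, so the bound is trivial.'' This is false: in this paper $\Phi(x)=\frac{1}{\sqrt{2\pi}}\int_x^\infty e^{-u^2/2}\,{\rm d}u$ is the \emph{upper} tail, so $\Phi(x)\to 1$ as $x\to-\infty$, and correspondingly $J_2(t,x)$ with $T=0$ is essentially the full probability mass, also close to $1$. Neither quantity is small; what must be shown is that their \emph{difference} is small. The paper does this by computing $a_n(t)K_2(t,x)$ explicitly at $T=0$ (the quantities $N_1(t)$, $N_2(t)$ in \eqref{eq:k2-bound-11}) and pairing $N_1(t)$ against $\Phi(x)$, using that $\int_x^{-(n-1/2)\sqrt t}e^{-u^2/2}\,{\rm d}u\lesssim e^{-(n-1/2)^2t/2}/\sqrt t$ uniformly in $x\le-(n-1/2)\sqrt t$. (A shorter correct argument is also available: since $J_1(t,x)=0$ when $T=0$ by \eqref{eq:ind-1}--\eqref{eq:ind-2} and $J_1+J_2=P(R_t^{(2n)}\ge 0)=1$, one has $J_2=1$ there, and $|J_2-\Phi(x)|=1-\Phi(x)$ is exponentially small.) Either way, the step as you wrote it does not go through and needs to be replaced.
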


\begin{proof}
Let 
$$
a_n(t)=\omega_{2n}\frac{e^{-n(n-1)t/2}}{(2\pi)^{n-1}}=\frac{\pi e^{-n(n-1)t/2}}{2^{n-2}(n-1)!}
$$
and
$$
K(t,x)
=\int_T^{\infty}
\left(\left(\frac{1}{\sinh r}\frac{\partial}{\partial r}\right)^{n-1}
\sinh^{2n-2} r\right)q_2(t,r)\sinh r\,{\rm d}r
$$
so that 
\begin{equation}\label{eq:ii-divide-0}
J_2(t,x)=a_n(t)K(t,x).
\end{equation}
Then by \eqref{eq:d=2} and the Fubini theorem,
\begin{equation}\label{eq:d-dist-even-1}
\begin{split}
&K(t,x)\\
&=\frac{2^{1/2}e^{-t/8}}{(2\pi t)^{3/2}}
\int_T^{\infty}s e^{-s^2/(2t)}
\left\{\int_T^s \left(\left(\frac{1}{\sinh r}\frac{\partial}{\partial r}\right)^{n-1}\sinh^{2n-2} r\right)
\frac{\sinh r}{(\cosh s-\cosh r)^{1/2}}\,{\rm d}r\right\}\,{\rm d}s.
\end{split}
\end{equation}
By the integration by parts formula,
\begin{equation*}
\begin{split}
&\int_T^s \left(\left(\frac{1}{\sinh r}\frac{\partial}{\partial r}\right)^{n-1}\sinh^{2n-2} r\right)
\frac{\sinh r}{(\cosh s-\cosh r)^{1/2}}\,{\rm d}r\\
&=\left[-2\left(\left(\frac{1}{\sinh r}\frac{\partial}{\partial r}\right)^{n-1}\sinh^{2n-2} r\right)
(\cosh s-\cosh r)^{1/2}\right]_{r=T}^{r=s}\\
&+2\int_T^s \left(\left(\frac{1}{\sinh r}\frac{\partial}{\partial r}\right)^{n}
\sinh^{2n-2} r\right)(\cosh s-\cosh r)^{1/2}\sinh r \,{\rm d}r\\
&=2\left(\left(\frac{1}{\sinh r}\frac{\partial}{\partial r}\right)^{n-1}\sinh^{2n-2} r\right)\Bigg|_{r=T}
(\cosh s-\cosh T)^{1/2}\\
&+2\int_T^s \left(\left(\frac{1}{\sinh r}\frac{\partial}{\partial r}\right)^{n}
\sinh^{2n-2} r\right)(\cosh s-\cosh r)^{1/2}\sinh r \,{\rm d}r.
\end{split}
\end{equation*}
Inductively, we get 
\begin{equation}\label{eq:ind-3}
\begin{split}
&\int_T^s \left(\left(\frac{1}{\sinh r}\frac{\partial}{\partial r}\right)^{n-1}\sinh^{2n-2} r\right)
\frac{\sinh r}{(\cosh s-\cosh r)^{1/2}}\,{\rm d}r\\
&=\sum_{k=1}^{n-1}\frac{2^k}{(2k-1)!!}
\left(\left(\frac{1}{\sinh r}\frac{\partial}{\partial r}\right)^{n-2+k}\right)\sinh^{2n-2}r\,\Big|_{r=T}
(\cosh s-\cosh T)^{k-1/2}\\
&+\frac{2^{n-1}}{(2n-3)!!}\int_T^s \left(\left(\frac{1}{\sinh r}\frac{\partial}{\partial r}\right)^{2n-2}\sinh^{2n-2}r\right)
(\cosh s-\cosh r)^{n-3/2}\sinh r\,{\rm d}r.
\end{split}
\end{equation}
Since it follows by \eqref{eq:ind-1} that 
$$
\left(\frac{1}{\sinh r}\frac{\partial}{\partial r}\right)^{2n-2}\sinh^{2n-2}r
=(2n-2)!,
$$
we have 
\begin{equation*}
\begin{split}
&\int_T^s \left(\left(\frac{1}{\sinh r}\frac{\partial}{\partial r}\right)^{2n-2}\sinh^{2n-2}r\right)
(\cosh s-\cosh r)^{n-3/2}\sinh r\,{\rm d}r\\
&=(2n-2)!
\int_T^s (\cosh s-\cosh r)^{n-3/2}\sinh r\,{\rm d}r
=\frac{2(2n-2)!}{2n-1}(\cosh s-\cosh T)^{n-1/2}
\end{split}
\end{equation*}
and thus 
\begin{equation*}
\begin{split}
&\frac{2^{n-1}}{(2n-3)!!}\int_T^s \left(\left(\frac{1}{\sinh r}\frac{\partial}{\partial r}\right)^{2n-2}\sinh^{2n-2}r\right)
(\cosh s-\cosh r)^{n-3/2}\sinh r\,{\rm d}r\\
&=\frac{2^n(2n-2)!}{(2n-1)!!}(\cosh s-\cosh T)^{n-1/2}
=\frac{2^{2n-1}(n-1)!}{2n-1}(\cosh s-\cosh T)^{n-1/2}.
\end{split}
\end{equation*}
Combining this with \eqref{eq:d-dist-even-1} and \eqref{eq:ind-3}, 
we obtain 
\begin{equation}\label{eq:d-dist-even-2}
\begin{split}
K(t,x)
&=\frac{2^{1/2}e^{-t/8}}{(2\pi t)^{3/2}}
\sum_{k=1}^{n-1}\frac{2^k}{(2k-1)!!}
\left(\left(\frac{1}{\sinh r}\frac{\partial}{\partial r}\right)^{n-2+k}\right)\sinh^{2n-2}r\,\Big|_{r=T}\\
&\times \int_T^{\infty}s e^{-s^2/(2t)}
(\cosh s-\cosh T)^{k-1/2}\,{\rm d}s\\
&+\frac{2^{2n-1}(n-1)!}{2n-1}\frac{2^{1/2}e^{-t/8}}{(2\pi t)^{3/2}}
\int_T^{\infty}s e^{-s^2/(2t)}
(\cosh s-\cosh T)^{n-1/2}\,{\rm d}s\\
&=K_1(t,x)+K_2(t,x),
\end{split}
\end{equation}
whence by \eqref{eq:ii-divide-0},
\begin{equation}\label{eq:ii-divide}
J_2(t,x)=a_n(t)K_1(t,x)+a_n(t)K_2(t,x).
\end{equation}

Let us first estimate $a_n(t)K_1(t,x)$. 
By \eqref{eq:ind-1} and \eqref{eq:ind-2}, 
there exists a constant $c_1>0$, 
which is independent of $t$, $T$ and $k \ (1\le k\le n-1)$, 
such that  
\begin{equation}\label{eq:sinh-deri}
0\le 
\left(\left(\frac{1}{\sinh r}\frac{\partial}{\partial r}\right)^{n-2+k}\right)\sinh^{2n-2}r\,\Big|_{r=T}
\le c_1 e^{(n-k)T}=c_1 e^{(n-k)((x\sqrt{t}+(n-1/2)t)\vee 0)}.
\end{equation}
By the change of variables formula with $s=u\sqrt{t}+(n-1/2)t$, 
we also obtain
\begin{equation*}
\begin{split}
&\frac{2^{1/2}e^{-t/8}}{(2\pi t)^{3/2}}\int_T^{\infty}s e^{-s^2/(2t)}(\cosh s-\cosh T)^{k-1/2}\,{\rm d}s\\
&=\frac{2^{1/2}e^{-t/8}}{(2\pi t)^{3/2}}
\sqrt{t}\int_{x\vee (-(n-1/2)\sqrt{t})}^{\infty}
\left(u\sqrt{t}+\left(n-\frac{1}{2}\right)t\right) e^{-(u+(n-1/2)\sqrt{t})^2/2}\\
&\times \left(\cosh\left(u\sqrt{t}+\left(n-\frac{1}{2}\right)t\right)
-\cosh \left(\left(x\sqrt{t}+\left(n-\frac{1}{2}\right)t\right)\vee 0\right)\right)^{k-1/2}\,{\rm d}u.
\end{split}
\end{equation*}
Then for any $k\ge 1$ and $u\ge x\vee (-(n-1/2)\sqrt{t})$, we have 
\begin{equation*}
\begin{split}
0
&\le \left(\cosh \left(u\sqrt{t}+\left(n-\frac{1}{2}\right)t\right)
-\cosh \left(\left(x\sqrt{t}+\left(n-\frac{1}{2}\right)t\right)\vee 0\right)\right)^{k-1/2}\\
&\le \cosh^{k-1/2} \left(u\sqrt{t}+\left(n-\frac{1}{2}\right)t\right)
\le e^{(u\sqrt{t}+(n-1/2)t)(k-1/2)}
\end{split}
\end{equation*}
so that for any $t\ge 1$,
\begin{equation*}
\begin{split}
0&\le\frac{2^{1/2}e^{-t/8}}{(2\pi t)^{3/2}}
\int_T^{\infty}s e^{-s^2/(2t)}(\cosh s-\cosh T)^{k-1/2}\,{\rm d}s \\
&\le c_2e^{-t/8}
\int_{x\vee (-(n-1/2)\sqrt{t})}^{\infty}
\left(\frac{|u|}{\sqrt{t}}+1\right) e^{-(u+(n-1/2)\sqrt{t})^2/2}  e^{(u\sqrt{t}+(n-1/2)t)(k-1/2)}\,{\rm d}u\\
&=c_2e^{-(n^2-2nk+k)t/2}
\int_{x\vee (-(n-1/2)\sqrt{t})}^{\infty}
\left(\frac{|u|}{\sqrt{t}}+1\right) e^{-u^2/2} e^{-(n-k)\sqrt{t}u}\,{\rm d}u\\
&\le c_3e^{-(n^2-2nk+k)t/2}
\int_{x\vee (-(n-1/2)\sqrt{t})}^{\infty} e^{-(n-k)\sqrt{t}u}\,{\rm d}u
=c_3e^{-(n^2-2nk+k)t/2}\frac{e^{-(n-k)(x\sqrt{t}\vee (-(n-1/2)t))}}{(n-k)\sqrt{t}}.
\end{split}
\end{equation*}
Combining this with \eqref{eq:sinh-deri}, we get 
\begin{equation*}
\begin{split}
0&\le
\left(\left(\frac{1}{\sinh r}\frac{\partial}{\partial r}\right)^{n-2+k}\right)\sinh^{2n-2}r\,\Big|_{r=T}
\frac{2^{1/2}e^{-t/8}}{(2\pi t)^{3/2}}
\int_T^{\infty}s e^{-s^2/(2t)}(\cosh s-\cosh T)^{k-1/2}\,{\rm d}s\\
&\le c_4e^{(n-k)((x\sqrt{t}+(n-1/2)t)\vee 0)}e^{-(n^2-2nk+k)t/2}
\frac{e^{-(n-k)(x\sqrt{t}\vee (-(n-1/2)t))}}{(n-k)\sqrt{t}}
=\frac{c_4e^{n(n-1)t/2}}{\sqrt{t}}.
\end{split}
\end{equation*}
This implies that 
$$
0\le K_1(t,x)\le \frac{c_5e^{n(n-1)t/2}}{\sqrt{t}}
$$
and thus
\begin{equation}\label{eq:k1-bound}
a_n(t)K_1(t,x)=\frac{\pi e^{-n(n-1)t/2}}{2^{n-2}(n-1)!}K_1(t,x)
\le \frac{c_6}{\sqrt{t}}.
\end{equation}

Let us next estimate $a_n(t)K_2(t,x)$. 
By definition, 
\begin{equation*}
\begin{split}
a_n(t)K_2(t,x)
&=\frac{2^n}{(2n-1)\sqrt{\pi}}\frac{e^{-(n-1/2)^2t/2}}{t^{3/2}}\int_T^{\infty}s e^{-s^2/(2t)}(\cosh s-\cosh T)^{n-1/2}\,{\rm d}s\\
&=b_n(t)\int_T^{\infty}s e^{-s^2/(2t)}(\cosh s-\cosh T)^{n-1/2}\,{\rm d}s.
\end{split}
\end{equation*}
We first assume that $x\ge -(n-1/2)\sqrt{t}$. 
Then by the change of variables formula with $s=u\sqrt{t}+(n-1/2)t$,  
\begin{equation*}
\begin{split}
&\int_T^{\infty}s e^{-s^2/(2t)}(\cosh s-\cosh T)^{n-1/2}\,{\rm d}s\\
&=te^{-(n-1/2)^2t/2}\int_x^{\infty}
ue^{-u^2/2}e^{-(n-1/2)u\sqrt{t}}\left(\cosh \left(u\sqrt{t}+\left(n-\frac{1}{2}\right)t\right)-\cosh T\right)^{n-1/2}\,{\rm d}u\\
&+\left(n-\frac{1}{2}\right)t\sqrt{t}e^{-(n-1/2)^2t/2}\\
&\times 
\int_x^{\infty}
e^{-u^2/2}e^{-(n-1/2)u\sqrt{t}}\left(\cosh \left(u\sqrt{t}+\left(n-\frac{1}{2}\right)t\right)-\cosh T\right)^{n-1/2}\,{\rm d}u\\
&=L_1(t,x)+L_2(t,x).
\end{split}
\end{equation*}
Therefore, 
\begin{equation}\label{eq:k2-bound-1}
a_n(t)K_2(t,x)=b_n(t)L_1(t,x)+b_n(t)L_2(t,x).
\end{equation}
Since we have for any $u\ge x$,
\begin{equation*}
\begin{split}
e^{-(n-1/2)u\sqrt{t}}\left(\cosh \left(u\sqrt{t}+\left(n-\frac{1}{2}\right)t\right)-\cosh T\right)^{n-1/2}
&\le e^{-(n-1/2)u\sqrt{t}}\left(e^{u\sqrt{t}+(n-1/2)t}\right)^{n-1/2}\\
&=e^{(n-1/2)^2t},
\end{split}
\end{equation*}
it follows that 
\begin{equation*}
\begin{split}
|L_1(t,x)|
&\le te^{(n-1/2)^2t/2}\int_x^{\infty}|u|e^{-u^2/2}\,{\rm d}u
\le te^{(n-1/2)^2t/2}\int_{-\infty}^{\infty}|u|e^{-u^2/2}\,{\rm d}u=2te^{(n-1/2)^2t/2},
\end{split}
\end{equation*}
which yields
\begin{equation}\label{eq:k2-bound-2}
|b_n(t)L_1(t,x)|\le \frac{c_7}{\sqrt{t}}.
\end{equation}

Since 
\begin{equation*}
\begin{split}
&\left(\cosh \left(u\sqrt{t}+\left(n-\frac{1}{2}\right)t\right)-\cosh T\right)^{n-1/2}\\
&=\cosh^{n-1/2}\left(u\sqrt{t}+\left(n-\frac{1}{2}\right)t\right)
\left(1-\frac{\cosh T}{\cosh \left(u\sqrt{t}+(n-1/2)t\right)}\right)^{n-1/2}\\
&=\frac{1}{2^{n-1/2}}e^{(n-1/2)^2t}e^{(n-1/2)u\sqrt{t}}
\left(1+e^{-2(u\sqrt{t}+(n-1/2)t)}\right)^{n-1/2}\\
&\times\left(1-\frac{\cosh T}{\cosh \left(u\sqrt{t}+(n-1/2)t\right)}\right)^{n-1/2},
\end{split}
\end{equation*}
we obtain
\begin{equation*}
\begin{split}
L_2(t,x)
&=\frac{2n-1}{2^{n+1/2}}t\sqrt{t}e^{(n-1/2)^2t/2}\\
&\times \int_x^{\infty}
e^{-u^2/2}\left(1+e^{-2(u\sqrt{t}+(n-1/2)t)}\right)^{n-1/2}
\left(1-\frac{\cosh T}{\cosh \left(u\sqrt{t}+(n-1/2)t\right)}\right)^{n-1/2}\,{\rm d}u.
\end{split}
\end{equation*}
Noting that 
$$
b_n(t)\frac{2n-1}{2^{n+1/2}}t\sqrt{t}e^{(n-1/2)^2t/2}=\frac{1}{\sqrt{2\pi}},
$$
we further have 
\begin{equation*}
\begin{split}
&b_n(t)L_2(t,x)\\
&=\frac{1}{\sqrt{2\pi}}
\int_x^{\infty}
e^{-u^2/2}\left(1+e^{-2(u\sqrt{t}+(n-1/2)t)}\right)^{n-1/2}
\left(1-\frac{\cosh T}{\cosh \left(u\sqrt{t}+(n-1/2)t\right)}\right)^{n-1/2}\,{\rm d}u,
\end{split}
\end{equation*}
which yields
\begin{equation}\label{eq:k2-bound-3}
\begin{split}
&b_n(t)L_2(t,x)-\Phi(x)
=\frac{1}{\sqrt{2\pi}}
\int_x^{\infty}
e^{-u^2/2}\left\{\left(1+e^{-2(u\sqrt{t}+(n-1/2)t)}\right)^{n-1/2}-1\right\}\,{\rm d}u\\
&+\frac{1}{\sqrt{2\pi}}\\
&\times \int_x^{\infty}
e^{-u^2/2}\left(1+e^{-2(u\sqrt{t}+(n-1/2)t)}\right)^{n-1/2}
\left\{\left(1-\frac{\cosh T}{\cosh \left(u\sqrt{t}+(n-1/2)t\right)}\right)^{n-1/2}-1\right\}\,{\rm d}u\\
&=M_1(t,x)+M_2(t,x).
\end{split}
\end{equation}

Note that for any $p\ge 0$,  
$$
(1+t)^p-1\asymp  t, \quad 0\le t\le 1.
$$
Then for any $t\ge 1$ and $u\ge -(n-1/2)\sqrt{t}$, 
$$
\left(1+e^{-2(u\sqrt{t}+(n-1/2)t)}\right)^{n-1/2}-1\asymp e^{-2(u\sqrt{t}+(n-1/2)t)}.
$$
Hence for any $t\ge 1$ and $x\ge -(n-1/2)\sqrt{t}$, 
$$
M_1(t,x)\asymp 
\int_x^{\infty}e^{-u^2/2}e^{-2(u\sqrt{t}+(n-1/2)t)}\,{\rm d}u
=e^{-(2n-3)t}\int_x^{\infty}e^{-(u+2\sqrt{t})^2/2}\,{\rm d}u.
$$
Moreover,  by the change of variables formula with $v=u+2\sqrt{t}$, 
we have for any $t\ge 1$,
$$
\int_x^{\infty}e^{-(u+2\sqrt{t})^2/2}\,{\rm d}u
=\int_{x+2\sqrt{t}}^{\infty}e^{-v^2/2}\,{\rm d}v
\le \int_{-(n-5/2)t}^{\infty}e^{-v^2/2}\,{\rm d}v
\lesssim  
\begin{cases}
e^{-9t/8}/\sqrt{t} & (n=1),\\
1 & (n\ge 2),
\end{cases}
$$
which implies that for any $t\ge 1$, 
\begin{equation}\label{eq:m1-bound}
\sup_{x\ge -(n-1/2)t}M_1(t,x)
\lesssim 
\begin{cases}
e^{-t/8}/\sqrt{t} & (n=1),\\
e^{-(2n-3)t} & (n\ge 2).
\end{cases}
\end{equation}
In particular, the right hand side above decays exponentially for any $n\ge 1$.

We also note that for any $p\ge 0$, there exists a constant $c_8>0$ such that 
\begin{equation}\label{eq:p-ineq}
0\le 1-(1-t)^p\le c_8t, \quad 0\le t\le 1.
\end{equation}
Therefore, for any $u\ge x$,
\begin{equation*}
\begin{split}
0&\le 1-\left(1-\frac{\cosh T}{\cosh \left(u\sqrt{t}+(n-1/2)t\right)}\right)^{n-1/2}
\le \frac{c_8\cosh T}{\cosh \left(u\sqrt{t}+(n-1/2)t\right)}\\
&\le 2c_8 e^Te^{-(u\sqrt{t}+(n-1/2)t)}=2c_8e^{-(u-x)\sqrt{t}},
\end{split}
\end{equation*}
which implies that 
\begin{equation*}
\begin{split}
|M_2(t,x)|
=-M_2(t,x)
&\le c_9\int_x^{\infty}e^{-u^2/2}e^{-(u-x)\sqrt{t}}\,{\rm d}u
=c_9\int_0^{\infty}e^{-(v+x)^2/2}e^{-v\sqrt{t}}\,{\rm d}v\\
&\le c_9\int_0^{\infty}e^{-v\sqrt{t}}\,{\rm d}v=\frac{c_9}{\sqrt{t}}.
\end{split}
\end{equation*}
Combining this with \eqref{eq:k2-bound-3} and \eqref{eq:m1-bound}, 
we obtain for any $t\ge 1$, 
$$
\sup_{x\ge -(n-1/2)t}\left|b_n(t)L_2(t,x)-\Phi(x)\right|
\lesssim \frac{1}{\sqrt{t}}.
$$
By \eqref{eq:k2-bound-1} and \eqref{eq:k2-bound-2}, 
we further get for any $t\ge 1$,
$$
\sup_{x\ge -(n-1/2)t}\left|a_n(t)K_2(t,x)-\Phi(x)\right|
\lesssim \frac{1}{\sqrt{t}}.
$$
Hence by \eqref{eq:ii-divide} and \eqref{eq:k1-bound},
$$
\sup_{x\ge -(n-1/2)t}\left|J_2(t,x)-\Phi(x)\right|
\lesssim \frac{1}{\sqrt{t}}, \quad t\ge 1.
$$

We next assume that $x\le -(n-1/2)\sqrt{t}$ and thus $T=0$. 
By the change of variables formula with $s=u\sqrt{t}$, we have 
\begin{equation*}
\begin{split}
0\le \int_0^{\infty}se^{-s^2/(2t)}(\cosh s-1)^{n-1/2}\,{\rm d}s
&=t\int_0^{\infty}ue^{-u^2/2}(\cosh(u\sqrt{t})-1)^{n-1/2}\,{\rm d}u\\
&=2^{n-1/2}t\int_0^{\infty}ue^{-u^2/2}\sinh^{2n-1}(u\sqrt{t}/2)\,{\rm d}u.
\end{split}
\end{equation*}
At the last equality, we used the formula $\cosh(u\sqrt{t})-1=2\sinh^2(u\sqrt{t}/2)$.
Since $ue^{-u^2/2}=-(e^{-u^2/2})'$, we have  by the integration by parts formula, 
$$
\int_0^{\infty}ue^{-u^2/2}\sinh^{2n-1}(u\sqrt{t}/2)\,{\rm d}u
=\left(n-\frac{1}{2}\right)\sqrt{t}\int_0^{\infty}e^{-u^2/2}\sinh^{2n-2}(u\sqrt{t}/2)\cosh(u\sqrt{t}/2)\,{\rm d}u,
$$
which yields
\begin{equation*}
\begin{split}
&\int_0^{\infty}se^{-s^2/(2t)}(\cosh s-1)^{n-1/2}\,{\rm d}s\\
&=2^{n-3/2}(2n-1)t\sqrt{t}\int_0^{\infty}e^{-u^2/2}\sinh^{2n-2}(u\sqrt{t}/2)\cosh(u\sqrt{t}/2)\,{\rm d}u\\
&=\frac{2n-1}{2^{n+1/2}}t\sqrt{t}
\int_0^{\infty}e^{-u^2/2}e^{(n-1/2)u\sqrt{t}}(1-e^{-u\sqrt{t}})^{2n-2}(1+e^{-u\sqrt{t}})\,{\rm d}u.
\end{split}
\end{equation*}
Hence as in \eqref{eq:k2-bound-1},
\begin{equation}\label{eq:k2-bound-11}
\begin{split}
a_n(t)K_2(t,x)
&=\frac{1}{\sqrt{2\pi}}e^{-(n-1/2)^2t/2}
\int_0^{\infty}e^{-u^2/2}e^{(n-1/2)u\sqrt{t}}(1-e^{-u\sqrt{t}})^{2n-2}(1+e^{-u\sqrt{t}})\,{\rm d}u\\
&=\frac{1}{\sqrt{2\pi}}
\int_0^{\infty}e^{-(u-(n-1/2)\sqrt{t})^2/2}(1-e^{-u\sqrt{t}})^{2n-2}\,{\rm d}u\\
&+\frac{e^{-(n-1)t}}{\sqrt{2\pi}}
\int_0^{\infty}e^{-(u-(n-3/2)\sqrt{t})^2/2}(1-e^{-u\sqrt{t}})^{2n-2}\,{\rm d}u\\
&=\frac{1}{\sqrt{2\pi}}
\int_{-(n-1/2)\sqrt{t}}^{\infty}e^{-u^2/2}(1-e^{-(u+(n-1/2)\sqrt{t})\sqrt{t}})^{2n-2}\,{\rm d}u\\
&+\frac{e^{-(n-1)t}}{\sqrt{2\pi}}
\int_{-(n-3/2)\sqrt{t}}^{\infty}e^{-u^2/2}(1-e^{-(u+(n-3/2)\sqrt{t})\sqrt{t}})^{2n-2}\,{\rm d}u\\
&=N_1(t)+N_2(t).
\end{split}
\end{equation}

For $N_1(t)$, 
\begin{equation*}
\begin{split}
&N_1(t)-\Phi(x)\\
&=\frac{1}{\sqrt{2\pi}}
\int_{-(n-1/2)\sqrt{t}}^{\infty}e^{-u^2/2}
\left\{(1-e^{-(u+(n-1/2)\sqrt{t})\sqrt{t}})^{2n-2}-1\right\}\,{\rm d}u
-\frac{1}{\sqrt{2\pi}}\int_x^{-(n-1/2)\sqrt{t}}e^{-u^2/2}\,{\rm d}u.
\end{split}
\end{equation*}
If $n=1$, then the first term of the right hand side above vanishes. 
For $n\ge 2$, it follows by \eqref{eq:p-ineq} that  for any $t\ge 1$,
\begin{equation}\label{eq:n_1}
\begin{split}
0
&\le \int_{-(n-1/2)\sqrt{t}}^{\infty}e^{-u^2/2}
\left\{1-(1-e^{-(u+(n-1/2)\sqrt{t})\sqrt{t}})^{2n-2}\right\}\,{\rm d}u\\
&\lesssim \int_{-(n-1/2)\sqrt{t}}^{\infty}e^{-u^2/2}
e^{-(u+(n-1/2)\sqrt{t})\sqrt{t}}\,{\rm d}u
=e^{-(n-1)t}\int_{-(n-1/2)\sqrt{t}}^{\infty}e^{-(u+\sqrt{t})^2/2}\,{\rm d}u\\
&= e^{-(n-1)t}\int_{-(n-3/2)\sqrt{t}}^{\infty}e^{-v^2/2}\,{\rm d}v
\lesssim e^{-(n-1)t}.
\end{split}
\end{equation}
Since
$$
\int_x^{-(n-1/2)\sqrt{t}}e^{-u^2/2}\,{\rm d}u
\le \int_{-\infty}^{-(n-1/2)\sqrt{t}}e^{-u^2/2}\,{\rm d}u
\asymp \frac{1}{\sqrt{t}}e^{-(n-1/2)^2t/2},
$$
we have  for any $t\ge 1$,
$$
\sup_{x\le -(n-1/2)\sqrt{t}}\left|N_1(t)-\Phi(x)\right|
\lesssim 
\begin{cases}
e^{-t/8}/\sqrt{t} & (n=1),\\
e^{-(n-1)t} & (n\ge 2).
\end{cases}
$$
For $N_2(t)$, it follows by a calculation similar to \eqref{eq:n_1} that for any $t\ge 1$,
$$
N_2(t)
\lesssim e^{-(n-1)t}\int_{-(n-3/2)\sqrt{t}}^{\infty}e^{-u^2/2}\,{\rm d}u
\lesssim 
\begin{cases}
e^{-t/8}/\sqrt{t} & (n=1),\\
e^{-(n-1)t} & (n\ge 2).
\end{cases}
$$
Combining the two inequalities above with \eqref{eq:k2-bound-11}, 
we get for any $t\ge 1$,
$$
\sup_{x\le -(n-1/2)\sqrt{t}}
\left|a_n(t)K_2(t,x)-\Phi(x)\right|
\lesssim 
\begin{cases}
e^{-t/8}/\sqrt{t} & (n=1),\\
e^{-(n-1)t} & (n\ge 2).
\end{cases}
$$
Hence by \eqref{eq:ii-divide} and \eqref{eq:k1-bound}, we obtain for any $t\ge 1$,
$$
\sup_{x\le -(n-1/2)\sqrt{t}}\left|J_2(t,x)-\Phi(x)\right|
\lesssim 
\frac{1}{\sqrt{t}}.
$$
The proof is complete.
\end{proof}

\begin{lem}\label{lem:even}
For $d=2$, there exists a constant $c>0$ such that 
$$
P\left(\frac{R_t^{(2)}-t/2}{\sqrt{t}}\geq 0\right)
-\Phi(0)\ge \frac{c}{\sqrt{t}}, 
\quad t\geq 1.
$$
\end{lem}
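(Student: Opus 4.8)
The plan is to handle $d=2$ directly from the explicit density \eqref{eq:d=2}, since it is the base case of the even family and no Millson reduction lowers the dimension below two. For $n=1$ the term $J_1$ in \eqref{eq:i+ii} vanishes, so by \eqref{eq:d-dist-even} the quantity to bound is exactly $J_2(t,0)$. Starting from \eqref{eq:dist-radial} and \eqref{eq:d=2} with $\omega_2=2\pi$, the Fubini theorem together with the elementary evaluation $\int_{t/2}^{s}\sinh r\,(\cosh s-\cosh r)^{-1/2}\,{\rm d}r=2(\cosh s-\cosh(t/2))^{1/2}$ gives
\[
P\Big(\frac{R_t^{(2)}-t/2}{\sqrt t}\ge 0\Big)=J_2(t,0)=\frac{2}{\sqrt\pi}\frac{e^{-t/8}}{t^{3/2}}\int_{t/2}^{\infty}s\,e^{-s^2/(2t)}(\cosh s-\cosh(t/2))^{1/2}\,{\rm d}s .
\]

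First I would put this in closed form. Substituting $s=u\sqrt t+t/2$ and factoring the square root via $\cosh(t/2+w)-\cosh(t/2)=2\sinh(t/2+w/2)\sinh(w/2)$, all exponential prefactors cancel and I expect to reach
\[
J_2(t,0)=\frac{1}{\sqrt{2\pi}}\int_0^{\infty}\Big(1+\frac{2u}{\sqrt t}\Big)e^{-u^2/2}\sqrt{(1-e^{-u\sqrt t})(1-e^{-(u\sqrt t+t)})}\,{\rm d}u .
\]
Subtracting $\Phi(0)=\tfrac{1}{\sqrt{2\pi}}\int_0^\infty e^{-u^2/2}\,{\rm d}u$ then exhibits the mechanism behind sharpness: the factor $1+2u/\sqrt t$ (a drift/curvature correction) contributes positively at order $t^{-1/2}$, while the square root, which is $<1$, contributes negatively at the same order.

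To make the positive part win I would use the pointwise bound $\sqrt{(1-e^{-u\sqrt t})(1-e^{-(u\sqrt t+t)})}\ge 1-e^{-u\sqrt t}$, valid because $e^{-(u\sqrt t+t)}\le e^{-u\sqrt t}$, which yields
\[
J_2(t,0)-\Phi(0)\ge \frac{1}{\sqrt{2\pi}}\int_0^{\infty}e^{-u^2/2}\Big[\frac{2u}{\sqrt t}-e^{-u\sqrt t}\Big(1+\frac{2u}{\sqrt t}\Big)\Big]\,{\rm d}u .
\]
The main term equals $\tfrac{2}{\sqrt{2\pi}}\,t^{-1/2}$, while bounding $e^{-u^2/2}\le 1$ in the remaining integrals gives a correction at most $\tfrac{1}{\sqrt{2\pi}}t^{-1/2}(1+2/t)$; hence $J_2(t,0)-\Phi(0)\ge \tfrac{1}{\sqrt{2\pi}}t^{-1/2}(1-2/t)\ge \tfrac{1}{2\sqrt{2\pi}}t^{-1/2}$ for $t\ge 4$. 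For the remaining compact range $1\le t\le 4$ the left-hand side is a continuous, strictly positive function of $t$ (clear from the closed form, where the positive bulk of the integrand outweighs the $O(t^{-1/2})$ deficit near $u=0$), so $\sqrt t\,(J_2(t,0)-\Phi(0))$ is bounded below by a positive constant there; taking the smaller of the two constants proves the lemma.

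The hard part is the penultimate step: both the positive and the negative contributions are genuinely of order $t^{-1/2}$, so any two-sided estimate that discards one of them fails, and the conclusion rests on the exact cancellation. This is precisely why the closed form — and in particular the clean factor $\sqrt{(1-e^{-u\sqrt t})(1-e^{-(u\sqrt t+t)})}$ produced by the $\cosh$-difference identity — is indispensable; a slightly sharper analysis through $v=u\sqrt t$ and $w=e^{-v}$ would even identify $\lim_{t\to\infty}\sqrt t\,(J_2(t,0)-\Phi(0))=\tfrac{2\ln 2}{\sqrt{2\pi}}>0$, confirming that the rate $t^{-1/2}$ cannot be improved. A secondary, purely routine point is the uniformity down to $t=1$, dealt with by the continuity-compactness argument above.
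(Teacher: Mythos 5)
Your route is genuinely different from the paper's, and the part you carry out in detail is correct. You keep the factor $(\cosh s-\cosh(t/2))^{1/2}$ in the numerator after the Fubini step and extract the positive $t^{-1/2}$ contribution from the Jacobian factor $1+2u/\sqrt t$; the paper instead performs one more integration by parts, so that the integrand becomes $e^{-s^2/(2t)}\sinh s\,(\cosh s-\cosh(t/2))^{-1/2}$, and extracts the positive contribution from the elementary inequality $(1-x)^{-1/2}-1\ge x/2$ applied to the factor $\bigl(1-\cosh(t/2)/\cosh(u\sqrt t+t/2)\bigr)^{-1/2}$. Your closed form is correct (the prefactors do cancel exactly as claimed, via $\cosh(t/2+w)-\cosh(t/2)=2\sinh(t/2+w/2)\sinh(w/2)$), the pointwise bound $\sqrt{(1-e^{-u\sqrt t})(1-e^{-(u\sqrt t+t)})}\ge 1-e^{-u\sqrt t}$ is valid, and the limit $2\ln 2/\sqrt{2\pi}$ you mention in passing is the right constant. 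Arguably your representation is cleaner than the paper's, since it avoids the singular factor at $u=0$.

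There is, however, one genuine gap: the range $1\le t<4$. Your bound on the negative term uses $e^{-u^2/2}\le 1$, which is too lossy there — at $t=1$ it gives $2-(1+2)=-1<0$, and at $t=2$ exactly $0$ — so the whole burden falls on the continuity–compactness patch, which in turn rests on the unproved assertion that $J_2(t,0)-\Phi(0)>0$ throughout $[1,4]$. ``The positive bulk of the integrand outweighs the deficit'' is a heuristic, not an argument, and it is precisely the delicate cancellation you yourself flag as the hard part of the proof. The good news is that your own decomposition fixes this if you evaluate the two Gaussian integrals exactly instead of crudely: writing $\Psi(a)=\int_a^\infty e^{-v^2/2}\,{\rm d}v$, completing the square gives
\begin{equation*}
\int_0^\infty e^{-u^2/2}e^{-u\sqrt t}\Bigl(1+\tfrac{2u}{\sqrt t}\Bigr)\,{\rm d}u
=e^{t/2}\Psi(\sqrt t)+\tfrac{2}{\sqrt t}\bigl(1-\sqrt t\,e^{t/2}\Psi(\sqrt t)\bigr)
=\tfrac{2}{\sqrt t}-e^{t/2}\Psi(\sqrt t),
\end{equation*}
so your lower bound is exactly
\begin{equation*}
P\Bigl(\tfrac{R_t^{(2)}-t/2}{\sqrt t}\ge 0\Bigr)-\Phi(0)\ \ge\ \frac{1}{\sqrt{2\pi}}\,e^{t/2}\Psi(\sqrt t)\ \ge\ \frac{1}{\sqrt{2\pi}}\cdot\frac{\sqrt t}{1+t}\ \ge\ \frac{1}{2\sqrt{2\pi}\,\sqrt t},\qquad t\ge 1,
\end{equation*}
using the standard Gaussian tail bound $\Psi(a)\ge a e^{-a^2/2}/(1+a^2)$. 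This removes the compactness step entirely and proves the lemma for all $t\ge 1$ in one stroke.
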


\begin{proof}
Let $R_t=R_t^{(2)}$. 
Then by \eqref{eq:d=2}, 
\begin{equation}\label{eq:dist-d=2}
\begin{split}
&P\left(\frac{R_t-t/2}{\sqrt{t}}\ge 0\right)
=P\left(R_t\ge \frac{t}{2}\right)
=\omega_2\int_{t/2}^{\infty}q_2(t,r)\sinh r\,{\rm d}r\\
&=\frac{e^{-t/8}}{\pi^{1/2}t^{3/2}}
\int_{t/2}^{\infty}\left(\int_r^{\infty}\frac{se^{-s^2/(2t)}}{(\cosh s-\cosh r)^{1/2}}\,{\rm d}s\right)\sinh r\,{\rm d}r
=\frac{e^{-t/8}}{\pi^{1/2}t^{3/2}}I(t).
\end{split}
\end{equation}
By the Fubini theorem, 
\begin{equation*}
\begin{split}
I(t)
&=\int_{t/2}^{\infty}se^{-s^2/(2t)}\left(\int_{t/2}^s\frac{\sinh r}{(\cosh s-\cosh r)^{1/2}}\,{\rm d}r\right)\,{\rm d}s\\
&=2\int_{t/2}^{\infty}se^{-s^2/(2t)}(\cosh s-\cosh (t/2))^{1/2}\,{\rm d}s.
\end{split}
\end{equation*}
Then by the integration by parts formula and the change of variables formula with $s=u\sqrt{t}+t/2$,   
\begin{equation*}
\begin{split}
&I(t)
=t\int_{t/2}^{\infty}e^{-s^2/(2t)}\frac{\sinh s}{(\cosh s-\cosh (t/2))^{1/2}}\,{\rm d}s\\
&=t^{3/2}e^{-t/8}\int_0^{\infty}e^{-u^2/2-u\sqrt{t}/2}
\frac{\sinh(u\sqrt{t}+t/2)}{(\cosh (u\sqrt{t}+t/2)-\cosh(t/2))^{1/2}}\,{\rm d}u\\
&=\frac{t^{3/2}e^{t/8}}{2^{1/2}}\int_0^{\infty}
e^{-u^2/2}
\left(1-\frac{\cosh(t/2)}{\cosh\left(u\sqrt{t}+t/2\right)}\right)^{-1/2}
\left(1-e^{-2(u\sqrt{t}+t/2)}\right)\left(1+e^{-2(u\sqrt{t}+t/2)}\right)^{-1/2}\,{\rm d}u. 
\end{split}
\end{equation*}
Hence by \eqref{eq:dist-d=2},
\begin{equation*}
\begin{split}
&P\left(\frac{R_t-t/2}{\sqrt{t}}\ge 0\right)\\
&=\frac{1}{\sqrt{2\pi }}
\int_0^{\infty}
e^{-u^2/2}
\left(1-\frac{\cosh(t/2)}{\cosh\left(u\sqrt{t}+t/2\right)}\right)^{-1/2}
\left(1-e^{-2(u\sqrt{t}+t/2)}\right)\left(1+e^{-2(u\sqrt{t}+t/2)}\right)^{-1/2}\,{\rm d}u.
\end{split}
\end{equation*}
In particular, if we let 
$$
F_t(u)=\left(1-\frac{\cosh(t/2)}{\cosh\left(u\sqrt{t}+t/2\right)}\right)^{-1/2}
\left(1-e^{-2(u\sqrt{t}+t/2)}\right)\left(1+e^{-2(u\sqrt{t}+t/2)}\right)^{-1/2}-1,
$$
then 
\begin{equation}\label{eq:sharp-1}
P\left(\frac{R_t-t/2}{\sqrt{t}}\ge 0\right)-\Phi(0)
=\frac{1}{\sqrt{2\pi}}\int_0^{\infty}
e^{-u^2/2}F_t(u)\,{\rm d}u. 
\end{equation}

For any $u\ge 0$, 
\begin{equation*}
\begin{split}
&\left(1-e^{-2(u\sqrt{t}+t/2)}\right)^2
-\left(1-\frac{\cosh(t/2)}{\cosh\left(u\sqrt{t}+t/2\right)}\right)
\left(1+e^{-2(u\sqrt{t}+t/2)}\right)\\
&=\frac{\cosh(t/2)}{\cosh\left(u\sqrt{t}+t/2\right)}
\left(1+e^{-2(u\sqrt{t}+t/2)}\right)
-3e^{-2(u\sqrt{t}+t/2)}+e^{-4(u\sqrt{t}+t/2)}\\
&
\ge \frac{e^{-u\sqrt{t}}}{2}-3e^{-2(u\sqrt{t}+t/2)}
=\frac{e^{-u\sqrt{t}}}{2}(1-6e^{-(u\sqrt{t}+t)}).
\end{split}
\end{equation*}
Therefore, if $t\ge \log 6$, then the last term above is positive for any $u\ge 0$ 
so that $F_t(u)$ and the integral in \eqref{eq:sharp-1} are positive.

For $u\geq 0$, we let 
\begin{equation}\label{eq:sharp-2}
\begin{split}
F_t(u)
&=\left(1-\frac{\cosh(t/2)}{\cosh\left(u\sqrt{t}+t/2\right)}\right)^{-1/2}-1\\
&+\left(1-\frac{\cosh(t/2)}{\cosh\left(u\sqrt{t}+t/2\right)}\right)^{-1/2}
\left\{\left(1-e^{-2(u\sqrt{t}+t/2)}\right)\left(1+e^{-2(u\sqrt{t}+t/2)}\right)^{-1/2}-1\right\}.
\end{split}
\end{equation}
Since 
$$
\frac{1}{\sqrt{1-x}}-1\ge 1-\sqrt{1-x}\ge \frac{1}{2}x, \quad 0\le x<1,
$$
we obtain 
\begin{equation}\label{eq:sharp-3}
\left(1-\frac{\cosh(t/2)}{\cosh\left(u\sqrt{t}+t/2\right)}\right)^{-1/2}-1
\ge \frac{1}{2}\frac{\cosh(t/2)}{\cosh\left(u\sqrt{t}+t/2\right)}
\ge \frac{e^{-u\sqrt{t}}}{4}.
\end{equation}

Suppose that $u\ge 1/\sqrt{t}$. Then 
$$
1-\frac{\cosh(t/2)}{\cosh(u\sqrt{t}+t/2)}\ge 1-\frac{\cosh(t/2)}{\cosh(1+t/2)}\ge 1-\frac{2}{e}>0.
$$
Moreover, since 
$$
\left(1-e^{-2(u\sqrt{t}+t/2)}\right)\left(1+e^{-2(u\sqrt{t}+t/2)}\right)^{-1/2}-1\le 0,
$$
we have 
\begin{equation}\label{eq:sharp-4}
\begin{split}
&\left(1-\frac{\cosh(t/2)}{\cosh\left(u\sqrt{t}+t/2\right)}\right)^{-1/2}
\left\{\left(1-e^{-2(u\sqrt{t}+t/2)}\right)\left(1+e^{-2(u\sqrt{t}+t/2)}\right)^{-1/2}-1\right\}\\
&=-\left(1-\frac{\cosh(t/2)}{\cosh\left(u\sqrt{t}+t/2\right)}\right)^{-1/2}
\left\{1-\left(1-e^{-2(u\sqrt{t}+t/2)}\right)\left(1+e^{-2(u\sqrt{t}+t/2)}\right)^{-1/2}\right\}\\
&\ge -(1-2e^{-1})^{-1/2}\left\{1-\left(1-e^{-2(u\sqrt{t}+t/2)}\right)\left(1+e^{-2(u\sqrt{t}+t/2)}\right)^{-1/2}\right\}.
\end{split}
\end{equation}
Noting that 
$$
1-\frac{1}{\sqrt{1+t}}\asymp t, \quad 0\le t\le 1,
$$
we obtain 
$$
0\le 1-(1+e^{-2(u\sqrt{t}+t/2)})^{-1/2}\le c_1e^{-2(u\sqrt{t}+t/2)}
$$
so that 
\begin{equation}\label{eq:sharp-5}
\begin{split}
&1-\left(1-e^{-2(u\sqrt{t}+t/2)}\right)\left(1+e^{-2(u\sqrt{t}+t/2)}\right)^{-1/2}\\
&=e^{-2(u\sqrt{t}+t/2)}
+\left(1-e^{-2(u\sqrt{t}+t/2)}\right)\left\{1-\left(1+e^{-2(u\sqrt{t}+t/2)}\right)^{-1/2}\right\}
\le c_2e^{-2(u\sqrt{t}+t/2)}.
\end{split}
\end{equation}
Therefore, by \eqref{eq:sharp-2}, \eqref{eq:sharp-3}, \eqref{eq:sharp-4} and \eqref{eq:sharp-5}, 
we have for any $t\ge 1$ and $u\ge 1/\sqrt{t}$, 
\begin{equation}\label{eq:f-lower}
F_t(u)\ge \frac{1}{4}e^{-u\sqrt{t}}-c_2e^{-2(u\sqrt{t}+t/2)}.
\end{equation}

Note that  for any $c>0$, 
$$
\int_{1/\sqrt{t}}^{\infty}e^{-u^2/2-cu\sqrt{t}}\,{\rm d}u
=e^{c^2t/2}\int_{1/\sqrt{t}}^{\infty}e^{-(u+c\sqrt{t})^2/2}\,{\rm d}u
=e^{c^2 t/2}\int_{c\sqrt{t}+1/\sqrt{t}}^{\infty}e^{-u^2/2}\,{\rm d}u
\asymp \frac{1}{\sqrt{t}}, \quad t\ge 1.
$$
Hence by \eqref{eq:f-lower} and taking $c=1$ and $c=2$ in the relation above, 
we get 
\begin{equation*}
\begin{split}
\int_0^{\infty}e^{-u^2/2}F_t(u)\,{\rm d}u
\ge \int_{1/\sqrt{t}}^{\infty}
e^{-u^2/2}F_t(u)\,{\rm d}u
&\ge \frac{1}{4}\int_{1/\sqrt{t}}^{\infty}e^{-u^2/2-u\sqrt{t}}\,{\rm d}u
-c_2e^{-t}\int_{1/\sqrt{t}}^{\infty}e^{-u^2/2-2u\sqrt{t}}\,{\rm d}u\\
&\ge \frac{c_3}{\sqrt{t}}.
\end{split}
\end{equation*}
Combining this with \eqref{eq:sharp-1}, 
we arrive at the desired conclusion.
\end{proof}

\begin{proof}[Proof of Theorem {\rm \ref{thm:berry}} for even dimensions]
The proof is complete by Lemmas \ref{lem:i-bound}, \ref{lem:ii-bound} and \ref{lem:even}.
\end{proof}

\appendix
\section{Appendix}\label{sect:append}
To prove Theorem \ref{thm:berry} for general odd dimensions, 
we show 
\begin{lem}\label{lem:deri}
For $l\ge 1$,
\begin{equation}\label{eq:deri-0}
\left(\frac{1}{\sinh r}\frac{\partial}{\partial r}\right)^l\sinh^{2l+1}r
=\frac{(2l+1)!!}{l+1}\sinh((l+1)r), \ r\in {\mathbb R}.
\end{equation}
\end{lem}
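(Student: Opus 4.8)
The plan is to argue by induction on $l$, writing $D=\frac{1}{\sinh r}\frac{\partial}{\partial r}$ for the operator in question. The base case $l=1$ is the direct computation $D\sinh^3 r=3\sinh r\cosh r=\tfrac32\sinh(2r)$, which matches $\frac{3!!}{2}\sinh(2r)$. The two facts that make the induction run are: first, $D$ is a derivation, since it is $\frac{1}{\sinh r}$ times the ordinary derivative, so the Leibniz rule $D^{m}(fg)=\sum_{k=0}^m\binom{m}{k}(D^kf)(D^{m-k}g)$ holds; second, $D\cosh r=1$ and hence $D^k\cosh r=0$ for all $k\ge 2$. The second fact will truncate every Leibniz expansion to just two terms.

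For the inductive step I would first peel off one derivative: since $D\sinh^{2l+1}r=(2l+1)\sinh^{2l-1}r\cosh r$, we have
\[
D^l\sinh^{2l+1}r=(2l+1)\,D^{l-1}\!\left(\sinh^{2l-1}r\cosh r\right).
\]
Applying the truncated Leibniz rule to the product $\sinh^{2l-1}r\cdot\cosh r$ and using $D^k\cosh r=0$ for $k\ge2$ leaves exactly
\[
D^{l-1}\!\left(\sinh^{2l-1}r\cosh r\right)=\cosh r\,\bigl(D^{l-1}\sinh^{2l-1}r\bigr)+(l-1)\,\bigl(D^{l-2}\sinh^{2l-1}r\bigr).
\]
The first factor $D^{l-1}\sinh^{2l-1}r$ is the level-$(l-1)$ instance of the lemma, so the induction hypothesis applies to it directly.

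The one term not covered by the hypothesis is $S:=D^{l-2}\sinh^{2l-1}r$, and handling it is the main obstacle: here the exponent $2l-1$ and the number $l-2$ of applied operators do not match the shape of the lemma, so $S$ must be found by other means. I would determine $S$ by observing that $DS=D^{l-1}\sinh^{2l-1}r$ is the level-$(l-1)$ quantity, so $S'(r)=\sinh r\cdot\frac{(2l-1)!!}{l}\sinh(lr)$ by the hypothesis; integrating with the product-to-sum identity $\sinh r\sinh(lr)=\tfrac12(\cosh((l+1)r)-\cosh((l-1)r))$ and fixing the integration constant by the parity relation $S(0)=0$ (valid since $D$ preserves oddness and $\sinh^{2l-1}r$ is odd) yields $S$ as an explicit combination of $\sinh((l+1)r)$ and $\sinh((l-1)r)$; note $l\ge2$ keeps $l-1\ge1$ away from the singular denominator. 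Substituting $S$ and the induction hypothesis back, and expanding $\sinh(lr)\cosh r=\tfrac12(\sinh((l+1)r)+\sinh((l-1)r))$, the $\sinh((l-1)r)$ contributions cancel and the coefficient of $\sinh((l+1)r)$ simplifies through $1+\frac{l-1}{l+1}=\frac{2l}{l+1}$ to exactly $\frac{(2l+1)!!}{l+1}$, completing the step. Since every manipulation is an identity of smooth functions for $r\neq0$ and both sides are continuous at $0$, the result holds for all $r\in\mathbb{R}$.
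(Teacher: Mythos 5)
Your proof is correct, but it takes a genuinely different route from the paper's. The paper proves the lemma by first establishing, by induction on $k$, the explicit combinatorial expansions \eqref{eq:ind-1} and \eqref{eq:ind-2} of $\bigl(\tfrac{1}{\sinh r}\tfrac{\partial}{\partial r}\bigr)^{j}\sinh^{n}r$ as sums of monomials $\cosh^{a}r\,\sinh^{b}r$ with factorial coefficients, then specializes to $n=4k\pm 1$ and resums using the binomial expansion of $\sinh(mr)$ in powers of $\cosh r$ and $\sinh r$. You instead induct directly on $l$, exploiting that $D=\tfrac{1}{\sinh r}\tfrac{\partial}{\partial r}$ is a derivation with $D\cosh r=1$ and $D^{k}\cosh r=0$ for $k\ge 2$, so that the iterated Leibniz rule collapses to two terms; the off-pattern term $S=D^{l-2}\sinh^{2l-1}r$ is then recovered by integrating the induction hypothesis (the product-to-sum identities, the determination of the constant via $S(0)=0$, and the final coefficient check $1+\tfrac{l-1}{l+1}=\tfrac{2l}{l+1}$ all verify correctly, and your continuity argument handles the removable singularity of $D$ at $r=0$). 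Your argument is shorter and more conceptual for the lemma as stated. What the paper's heavier computation buys is the intermediate formulas \eqref{eq:ind-1} and \eqref{eq:ind-2} themselves, which are invoked repeatedly in Section \ref{sect:berry} to bound $\bigl(\tfrac{1}{\sinh r}\tfrac{\partial}{\partial r}\bigr)^{m-1}\sinh^{d-2}r\big|_{r=T}$ above and below by $e^{(d-m-1)T}$; your proof would not supply those estimates, so in the context of the whole paper it could replace the proof of \eqref{eq:deri-0} but not the appendix's auxiliary identities.
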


\begin{proof}
Fix $n\geq 2$. We first prove by induction that for any $k=1,2,3,\dots, [n/2]$,
\begin{equation}\label{eq:ind-1}
\left(\frac{1}{\sinh r}\frac{\partial}{\partial r}\right)^{2k}\sinh^n r
=\sum_{l=0}^k\frac{(2k)!}{2^{k-l}(k-l)!(2l)!}
\left(\prod_{m=0}^{k+l-1}(n-2m)\right)\cosh^{2l}r\sinh^{n-(2k+2l)}r, 
\ r\in {\mathbb R}.
\end{equation}
For $k=1$, we have 
$$
\left(\frac{1}{\sinh r}\frac{\partial}{\partial r}\right)^2\sinh^n r
=n\sinh^{n-2}r+n(n-2) \cosh^2 r \sinh^{n-4}r
$$
so that \eqref{eq:ind-1} is valid. 
Suppose that \eqref{eq:ind-1} holds for some $k\geq 1$. 
Since 
\begin{equation*}
\begin{split}
&\frac{\partial}{\partial r}(\cosh^{2l}r\sinh^{n-(2k+2l)}r)\\
&=2l\cosh^{2l-1}r\sinh^{n-(2k+2l-1)}r+(n-(2k+2l))\cosh^{2l+1}r\sinh^{n-(2k+2l+1)}r,
\end{split}
\end{equation*}
we obtain by \eqref{eq:ind-1}, 
\begin{equation}\label{eq:ind-00}
\begin{split}
&\frac{\partial}{\partial r}\left(\frac{1}{\sinh r}\frac{\partial}{\partial r}\right)^{2k}\sinh^n r\\
&=\sum_{l=0}^k\frac{(2k)!}{2^{k-l}(k-l)!(2l)!}
\left(\prod_{m=0}^{k+l-1}(n-2m)\right)\frac{\partial}{\partial r}(\cosh^{2l}r\sinh^{n-(2k+2l)}r)\\
&=\sum_{l=1}^k\frac{(2k)!}{2^{k-l}(k-l)!(2l-1)!}
\left(\prod_{m=0}^{k+l-1}(n-2m)\right)\cosh^{2l-1}r\sinh^{n-(2k+2l-1)}r\\
&+\sum_{l=0}^k \frac{(2k)!}{2^{k-l}(k-l)!(2l)!}
\left(\prod_{m=0}^{k+l}(n-2m)\right)\cosh^{2l+1}r\sinh^{n-(2k+2l+1)}r\\
&=\sum_{l=1}^k\frac{(2k)!}{2^{k-l}(k-l)!(2l-1)!}
\left(\prod_{m=0}^{k+l-1}(n-2m)\right)\cosh^{2l-1}r\sinh^{n-(2k+2l-1)}r\\
&+\sum_{l=1}^{k+1} \frac{(2k)!}{2^{k-l+1}(k-l+1)!(2l-2)!}
\left(\prod_{m=0}^{k+l-1}(n-2m)\right)\cosh^{2l-1}r\sinh^{n-(2k+2l-1)}r\\
&=\sum_{l=1}^{k+1} \frac{(2k+1)!}{2^{k+1-l}(k+1-l)!(2l-1)!}
\left(\prod_{m=0}^{k+l-1}(n-2m)\right)\cosh^{2l-1}r\sinh^{n-(2k+2l-1)}r.
\end{split}
\end{equation}
Therefore, 
\begin{equation}\label{eq:ind-2}
\begin{split}
&\left(\frac{1}{\sinh r}\frac{\partial}{\partial r}\right)^{2k+1}\sinh^n r\\
&=\sum_{l=1}^{k+1} \frac{(2k+1)!}{2^{k+1-l}(k+1-l)!(2l-1)!}
\left(\prod_{m=0}^{k+l-1}(n-2m)\right)\cosh^{2l-1}r\sinh^{n-(2k+2l)}r.
\end{split}
\end{equation}
Using this equality, we obtain in a similar way to \eqref{eq:ind-00}, 
\begin{equation*}
\begin{split}
&\frac{\partial}{\partial r}\left(\frac{1}{\sinh r}\frac{\partial}{\partial r}\right)^{2k+1}\sinh^n r\\
&=\sum_{l=0}^{k+1} \frac{(2k+2)!}{2^{k+1-l}(k+1-l)!(2l)!}
\left(\prod_{m=0}^{k+l}(n-2m)\right)\cosh^{2l}r\sinh^{n-(2k+2l+1)}r
\end{split}
\end{equation*}
and thus
\begin{equation*}
\begin{split}
&\left(\frac{1}{\sinh r}\frac{\partial}{\partial r}\right)^{2k+2}\sinh^n r\\
&=\sum_{l=0}^{k+1} \frac{(2k+2)!}{2^{k+1-l}(k+1-l)!(2l)!}
\left(\prod_{m=0}^{k+l}(n-2m)\right)\cosh^{2l}r\sinh^{n-(2k+2+2l)}r.
\end{split}
\end{equation*}
This shows that \eqref{eq:ind-1} is valid for $k$ replaced with $k+1$, 
that is, the induction is complete.
In particular, our argument above implies that 
\eqref{eq:ind-1} and \eqref{eq:ind-2} hold for any $k=1,2,3,\dots, [n/2]$.

We next prove \eqref{eq:deri-0}. 
We here calculate the left hand side of \eqref{eq:deri-0} 
for  odd $l$ and even $l$, respectively. 
Namely, we will prove that for any $k\geq 1$,
\begin{equation}\label{eq:deri-1}
\left(\frac{1}{\sinh r}\frac{\partial}{\partial r}\right)^{2k-1}\sinh^{4k-1}r=\frac{(4k-1)!!}{2k}\sinh 2kr
\end{equation}
and 
\begin{equation}\label{eq:deri-2}
\left(\frac{1}{\sinh r}\frac{\partial}{\partial r}\right)^{2k}\sinh^{4k+1}r
=\frac{(4k+1)!!}{2k+1}\sinh ((2k+1)r).
\end{equation}

By \eqref{eq:ind-2}, 
\begin{equation*}
\begin{split}
&\left(\frac{1}{\sinh r}\frac{\partial}{\partial r}\right)^{2k-1}\sinh^{4k-1} r\\
&=\sum_{l=1}^k \frac{(2k-1)!}{2^{k-l}(k-l)!(2l-1)!}
\left(\prod_{m=0}^{k+l-2}(4k-1-2m)\right)\cosh^{2l-1}r\sinh^{2k-2l+1}r\\
&=\sum_{j=0}^{k-1} \frac{(2k-1)!}{2^j j! (2k-2j-1)!}
\left(\prod_{m=0}^{2k-j-2}(4k-1-2m)\right)\cosh^{2k-(2j+1)}r\sinh^{2j+1}r,
\end{split}
\end{equation*}
where we set $j=k-l$ in the second equality above. 
If $0\leq j\leq k-1$, then  
\begin{equation*}
\begin{split}
&\frac{(2k-1)!}{2^j j! (2k-2j-1)!}\prod_{m=0}^{2k-j-2}(4k-1-2m)
=\frac{(2k-1)!}{(2j)!!(2k-2j-1)!}\prod_{l=j+2}^{2k}(2l-1)\\
&=\frac{(4k-1)!!}{2k}\frac{(2k)!}{(2j+1)!(2k-(2j+1))!}
=\frac{(4k-1)!!}{2k}\binom{2k}{2j+1}
\end{split}
\end{equation*}
and thus
\begin{equation*}
\begin{split}
\left(\frac{1}{\sinh r}\frac{\partial}{\partial r}\right)^{2k-1}\sinh^{4k-1} r
&=\frac{(4k-1)!!}{2k}\sum_{j=0}^{k-1} \binom{2k}{2j+1}\cosh^{2k-(2j+1)}r\sinh^{2j+1}r\\
&=\frac{(4k-1)!!}{2k}\sinh 2kr.
\end{split}
\end{equation*}
The proof of \eqref{eq:deri-1} is complete.

By \eqref{eq:ind-1} and a similar argument as above, 
\begin{equation*}
\begin{split}
&\left(\frac{1}{\sinh r}\frac{\partial}{\partial r}\right)^{2k}\sinh^{4k+1} r\\
&=\sum_{l=0}^k\frac{(2k)!}{2^{k-l}(k-l)!(2l)!}
\left(\prod_{m=0}^{k+l-1}(4k+1-2m)\right)\cosh^{2l}r\sinh^{2k-(2l-1)}r\\
&=\sum_{j=0}^k\frac{(2k)!}{2^j j!(2(k-j))!}
\left(\prod_{i=j+1}^{2k}(2i+1)\right)\cosh^{2(k-j)}r\sinh^{2j+1}r\\
&=\frac{(4k+1)!!}{2k+1}
\sum_{j=0}^k \binom{2k+1}{2j+1}\cosh^{2(k-j)}r\sinh^{2j+1}r
=\frac{(4k+1)!!}{2k+1}\sinh((2k+1)r).
\end{split}
\end{equation*}
The proof of  \eqref{eq:deri-2} is  complete.
\end{proof}

\end{document}